\title{Newton correction methods for computing real eigenpairs of symmetric tensors}
\author[1]{Ariel Jaffe}
\author[1]{Roi Weiss}
\author[1]{Boaz Nadler}
\affil[1]{Weizmann Institute of Science, Israel}
\affil[ ]{\textit {\{ariel.jaffe, roi.weiss, boaz.nadler\}@weizmann.ac.il}}
\date{}
	\ifodd\value{page}\relax
\DeclareMathOperator{\diag}{diag}
\DeclareMathOperator*{\argmin}{argmin}
\DeclarePairedDelimiter\floor{\lfloor}{\rfloor}
\newcommand{\T}{\mathcal T}
\newcommand{\R}{\mathbb R}
\newcommand{\A}{\mathbb A}
\newcommand{\Av}{\bm 1_{\mathbb A}}
\newcommand{\Avc}{\bm 1_{\mathbb A^c}}
\newcommand{\nrm}[1]{{\|#1\|}}
\newcommand{\eps}{\varepsilon}
\newcommand{\xv}{\bm x}
\newcommand{\xk}{\bm x_{(k)}}
\newcommand{\xo}{\bm x_{(1)}}
\newcommand{\xz}{\bm x_{(0)}}
\newcommand{\xkpo}{\bm x_{(k+1)}}
\newcommand{\xast}{\bm x^\ast}
\newcommand{\yv}{\bm y}
\newcommand{\yz}{\bm y_{(0)}}
\newcommand{\yast}{\bm y^*}
\newcommand{\yk}{\bm y_{(k)}}
\newcommand{\err}{e}
\newcommand{\lamast}{\lambda^\ast}
\newcommand{\g}{\gamma}
\newcommand{\bz}{\bm z}
\newcommand{\tA}{H}
\newcommand{\grad}{\bm g}
\newcommand{\ortc}{\bm u}
\newcommand{\ortcast}{\ortc^*}
\newcommand{\im}{i_1,\ldots,i_m}
\newcommand{\xim}{x_{i_1}\ldots x_{i_m}}
\newcommand{\xlocal}{\xv_{\text{loc}}}
\newcommand{\beq}{\begin{eqnarray*}}
	\newcommand{\eeq}{\end{eqnarray*}}
\newcommand{\beqn}{\begin{eqnarray}}
\newcommand{\eeqn}{\end{eqnarray}}
\newtheorem{theorem}{Theorem}
\newtheorem{proof}{Proof}
\newtheorem{lemma}{Lemma}
\newtheorem{proposition}{Proposition}
\newtheorem{conjecture*}{Conjecture}
\newtheorem{definition}{Definition}
\newtheorem{remark}{Remark}
\titleformat*{\section}{\Large\bfseries}
\providecommand{\keywords}[1]{\textbf{\textit{Key words.}} #1}
\begin{document}

%\degree{Master of Universe}
\maketitle
%
%% REQUIRED
\begin{abstract}
Real eigenpairs of symmetric tensors play an important role in multiple applications. 
In this paper we propose and analyze a fast iterative Newton-based method to compute 
real eigenpairs of symmetric tensors. 
We derive sufficient conditions for  
a real eigenpair to be a stable fixed point for our method,
and prove that given a sufficiently close initial guess, the 
convergence rate is quadratic.
Empirically, our method converges to a significantly larger number of eigenpairs compared to previously proposed iterative methods, and with enough random initializations typically finds all real eigenpairs.
%We conjecture that 
In particular,
for a {generic} symmetric tensor, the sufficient conditions for local convergence of our Newton-based method hold simultaneously for all its real eigenpairs.

\end{abstract}
%
%% REQUIRED
\keywords{
  tensor eigenvectors; tensor eigenvalues; symmetric tensor; higher-order power method; Newton-based methods; Newton correction method.
}

%\ams{bal}
%
%% REQUIRED
%\begin{AMS}
 % 15A69, %Linear and multilinear algebra; matrix theory - Multilinear algebra, tensor products
  %15A72, %Linear and multilinear algebra; matrix theory - Vector and tensor algebra, theory of invariants
  %15A18,  %Linear and multilinear algebra; matrix theory - Eigenvalues, singular values, and eigenvectors
  %49M15.  % Newton Type methods
%\end{AMS}
%
%\maketitle
\section{Introduction}
%\begin{itemize}
%	\item Applications, (motivation)
%	%\item definition + other kinds of eigenvectors
%	\item Properties + matrix comparison
%	\item Methods, cite kolda	
%	\item Outline+Contrinution
%	% section 2
%	\item preliminaries
%	\item eigenvector definition + other kinds of eigenvectors
%\end{itemize}
%
%%\paragraph{motivation} Importance of tensors for data analysis, symmetric tensors, eigenvalues and eigenvectosr. Appears in wide range of applications.

Eigenpairs of symmetric tensors have received much attention in recent years due to their applicability in a wide range of disciplines.
Introduced by Lim \cite{limsingular} and Qi \cite{qi2005eigenvalues}, 
tensor eigenpairs were used for example in the analysis of hypergraphs \cite{li2013z}, high-order Markov chains \cite{ng2009finding}, establishing the positive-definiteness of multivariate forms \cite{ni2008eigenvalue}, diffusion tensor imaging \cite{schultz2014higher,qi2008d}, and data analysis \cite{anandkumar2012method,anandkumar2014tensor}.

The focus of this paper is on fast iterative methods to compute the real eigenpairs of symmetric tensors.
These methods were recently applied by the authors and collaborators in  \cite{Jaffe2018learning} for learning a binary latent variable model by computing the eigenpairs of a third order moment tensor of the observed data.

There are major differences between tensor eigenpairs, whose formal definition is reviewed in Section \ref{sec:preliminaries}, and their well studied matrix counterparts.
Whereas any symmetric $n \times n$ matrix has exactly $n$ real eigenvalues with corresponding orthogonal eigenvectors,
%This property allows us to find all of them in a sequential manner. 
the situation for tensors is fundamentally different. 
%In contrast to  matrices, 
The eigenvectors of a symmetric tensor are not necessarily orthogonal and some may in fact be complex-valued.
Furthermore, the number of eigenvalues is in general significantly larger than $n$. 
%Cartwright and Sturmfels \cite{cartwright2013number} studied the number of distinct tensor eigenvalues. 
For symmetric real tensors of order $m$ and dimensionality $n$, 
%Cartwright and Sturmfels 
\cite{cartwright2013number} proved that the number of complex eigenvalues is at most $((m-1)^n-1)/(m-2)$.
For generic tensors, this is the \textit{exact} number of complex eigenvalues.
% of such eigenvalues.
%Chang et al.
%\todo{too detailed for the introduction?}
As a lower bound, it is known that for odd-order
tensors at least one real eigenvalue exists 
%\cite[Corollary 2.3]{cartwright2013number},
\cite{cartwright2013number},
while for symmetric even-ordered tensors at least $n$ real eigenvalues exist 
%\cite[Theorem 3.6] {chang2009eigenvalue}. 
\cite{chang2009eigenvalue}. 
Recently, 
%\cite{kozhasov2017fully} \todo{RW: added a ref} constructed real symmetric tensors having the maximum possible number of real eigenvalues, while
 \cite{breiding2017average} analyzed the expected number of real eigenvalues of a random Gaussian symmetric tensor.

%This richness however comes with major computational implications.
%While for a symmetric matrix one can find all eigenpairs in a %computationally efficient manner,
From a computational perspective, while all matrix eigenpairs can be computed efficiently, 
Hillar and Lim 
\cite{hillar2013most} showed that enumerating all eigenpairs of a symmetric tensor is in general \#P.
%(see, however, \cite{kolda2015symmetric}).
%Cui et al. 
Nonetheless, Cui et al.\ \cite{cui2014all}  derived a method to compute \textit{all} eigenpairs sequentially, based on a hierarchy of semidefinite relaxations.
Chen et al.\ \cite{chen2016computing} proposed a homotopy continuation method for the same purpose. 
While these algorithms are guaranteed to find all isolated
 eigenpairs, they are computationally demanding even for moderate tensor dimensions. For example, computing all real eigenpairs of a random $8\times 8 \times 8 \times 8$ tensor using the
\texttt{zeig} procedure in the \texttt{TenEig} package of \cite{chen2016computing}
takes several hours on a standard PC. 

In recent years, several iterative methods were developed to compute at least some
tensor eigenpairs. Some methods were specifically designed to compute
the largest eigenvalue \cite{ng2009finding,liu2010always,l2015sequential,gautier2016tensor}.
Han \cite{han2012unconstrained} proposed a method based on unconstrained optimization to compute both the maximal and minimal eigenvalues of an even-order tensor. 
As described in Section \ref{sec:high_order}, adaptations of the popular power method to tensors were suggested in  \cite{Lathauwer_2000,kolda2011shifted,kolda2014adaptive,zhang2012best}. 
While these iterative methods are computationally fast, in general they converge to only a strict subset of all eigenpairs. 

In this work we present a different iterative approach to compute real eigenpairs
of symmetric tensors. As detailed in Section \ref{sec:newton_based}, our approach 
is based on adapting the
matrix Newton correction method (NCM) to the tensor case. We derive sufficient conditions for local convergence of NCM and prove that its convergence rate is quadratic.
Our analysis reveals that NCM may fail to converge to eigenvectors with eigenvalue zero and has small attraction region for eigenvalues close to zero. 
%the convergence of NCM depends on the eigenvalues of the Hessian matrix evaluated at the eigenpair.
%Specifically, a sufficient condition is that these eigenvalues are all bounded away from zero.
To overcome this limitation, we next derive a variant, denoted the orthogonal Newton correction method (O--NCM), which enjoys improved run-time and convergence guarantees.
We observe that for a generic symmetric tensor, the sufficient conditions for 
either NCM or O--NCM to converge to all its eigenpairs hold with probability one. 
In Section \ref{sec:fail} we illustrate that these sufficient conditions are 
%sufficient but 
not necessary. %analyze a carefully constructed example as to examine the behavior of O--NCM when the sufficient condition for convergence is violated.
%\todo{mention that it can converge from some directions? In other words, the above sufficient conditions are not necessary}
%A comparison of our method to the shifted power method appears in Section \ref{sec:discussion}. 

In Section \ref{sec:results} we present numerical simulations that support our theoretical analysis. 
For random tensors of modest size, multiple random initializations of NCM or O--NCM can find all eigenpairs significantly faster than other methods. For example,
on a random $8\times 8 \times 8 \times 8$  tensor, our methods typically found
all eigenpairs within a few seconds. 
%\paragraph{Contribution} two variations of the inverse power method for a CP tensor and general symmetric tensor.
We conclude with a summary and discussion in Section \ref{sec:discussion}.
%In this paper, we are mainly interested in \textit{cubic} tensors, that it, tensors of order $3$. We also
%focus on \textit{symmetric} (sometime referred to a \textit{supersymmetric}) tensors, which are invariable under \textbf{any} permutation of the indices, for example for tensor $\mathcal A$, $a_{ijk} = a_{jki}$ for any permutation of $i,j$ and $k$. 
\paragraph{Notation}
We denote vectors by lowercase boldface letters, as in $\xv$, matrices by uppercase letters, as in $W$, and
higher-order tensors by caligraphic letters, as in $\T$.
We denote $[n]=\{1,\dots,n\}$. $I$ is the identity matrix whose dimension depends on the context and $S_{n-1} = \{\xv\in\R^n:\nrm{\xv}=1\}$ is the unit sphere.

\section{The symmetric tensor eigen-problem}\label{sec:preliminaries}
Let $\T\in \R^{n\times\ldots\times n}$ be a tensor of order $m$ and dimension $n$, with entries $t_{\im}$, where $\im \in [n]$.
We assume that $\T$ is symmetric, namely, $t_{\im}= t_{\pi(\im)}$ for all permutations $\pi$ of the $m$ indices $\im$. 
A tensor $\T$ can be viewed as a {\em multi-linear} operator:
for matrices $W^{1},\dots,W^{m}$ with $W^{i}\in\R^{n \times d_i}$, the tensor-mode product, denoted by $\T(W^{1},\ldots,W^{m})\in\R^{d_1\times\dots\times d_m}$, yields a new tensor whose $(i_1,\dots,i_m)^{\text{th}}$ entry is
\beq
\left[\T(W^{1},\ldots,W^{m})\right]_{i_1,\dots,i_m} = 
\sum_{j_1,\dots,j_m\in[n]} 
%\sum_{j_1=1}^n \cdots \sum_{j_m=1}^n 
W^{1}_{j_1,i_1} \cdots W^{m}_{j_m,i_m} t_{j_1,\dots,j_m}.
\eeq

\paragraph{Tensor eigenpairs} 
%In contrast to the matrix case, 
Several definitions of tensor eigenpairs appear in the literature.
Here we consider the one introduced as $Z$-eigenpairs in \cite{qi2005eigenvalues} and $l^2$-eigenpairs in \cite{limsingular}.

\begin{definition}
\label{def:eig_def}
A pair $(\xast,\lamast)\in\R^{n}\times \R$ is a real eigenpair of $\T$ if
\begin{equation}\label{eq:fixed_point_def}
\mathcal T(I,\xast,\ldots,\xast) = \lamast \xast \quad \text{and} \quad {\nrm{\xast}}=1.
\end{equation}
%An eigenvector $\xast$ is said to be real if all its entries are in $\R$, as opposed to $\mathbb C$.
\end{definition}

Note that if $(\xast,\lamast)$ is an eigenpair, then  $(-\xast,(-1)^m\lamast)$ is an eigenpair as well. 
%is also an eigenpair.
Following common practice, we treat these two pairs as belonging to the same equivalence class \cite{cartwright2013number}. 

Definition \ref{def:eig_def} can be equivalently stated using
%as follows \cite{limsingular}.
%An equivalent definition of tensor eigenpairs 
%appears in \cite{limsingular}. 
%Consider 
%Consider 
the following $m$-degree homogeneous polynomial in $\xv \in \R^n$,
\begin{equation}\label{eq:induced_function}
\mu(\xv) = \mathcal T(\xv,\ldots,\xv) = \sum_{\im\in[n]} t_{\im}
x_{i_1} \cdots  x_{i_m}.
%x_{i_1} \cdot x_{i_2}\cdot\ldots \cdot x_{i_n}%.
\end{equation}
As shown in \cite{limsingular}, the real eigenpairs of $\T$ correspond to the {\em critical points} of $\mu(\xv)$ when constrained to the unit sphere  
$S_{n-1}$.
%$\nrm{\xv}=1$.
Formally, 
define the Lagrangian
\begin{equation}\label{eq:lagrangian}
L(\xv,\lambda) = \mu(\xv)-
\frac{m\lambda}{2}({\nrm{\xv}}^{2}-1),
\qquad \lambda \in\R.
\end{equation}
A constrained critical point $(\xast,\lamast)$ of $\mu$ satisfies the Karush-Kuhn-Tucker conditions,
\beq
\frac{1}{m}\nabla_{\xast}  L(\xast,\lamast) = \T(I,\xast,\ldots,\xast) - \lamast \xast = 0,
\eeq
where $\lamast = \mu(\xast)$ is such that ${\|\xast\|} = 1$.
This is precisely Equation \eqref{eq:fixed_point_def}.
%
%Denote the unit sphere by $S_n = \{\xv\in\R^n:\nrm{\xv}=1\}$.
For future use, we denote the {\em gradient} of $L(\xv,\lambda)$ at an arbitrary point $\xv \in S_{n-1}$
% on the unit sphere $S_n$
%$S_n = \{\xv\in\R^n:\nrm{\xv}=1\}$ 
by
\beqn
\label{eq:gradient}
\grad(\xv) = \frac{1}{m}\left.\nabla_{\xv} L(\xv,\lambda)\right|_{\lambda = \mu(\xv)} = \T(I,\xv,\ldots,\xv) - \mu(\xv) \xv.
\eeqn
Similarly, we denote the {\em Hessian matrix} by
\begin{equation}
\label{eq:hessian}
H(\xv) = \frac{1}{m}\left.\nabla_{\xv}^2 L(\xv,\lambda)\right|_{\lambda = \mu(\xv)} =  (m-1)\T(I,I,\xv,\ldots,\xv) - \mu(\xv) I.
\end{equation}
As will become clear in Section \ref{sec:newton_based}, the spectral structure of $H(\xast)$ plays a fundamental role in the convergence of our proposed Newton-based methods to 
%an eigenpair 
$(\xast,\lamast)$.

\section{Power methods for computing tensor eigenpairs}\label{sec:high_order}
%\todo{?Maybe remove word ``iterative'' and add few sentences on exact methods such as homotopy, since we are using it in our experiments?}
To motivate our approach, it is first instructive to briefly review previous iterative methods,
specifically the symmetric higher-order power method (HOPM)
%of Lathauwer et al.\!\! 
\cite{Lathauwer_2000} and the shifted-HOPM
%symmetric higher-order power method 
%(SS-HOPM) \todo{this SS makes my grandpa roll in his grave}
%of Kolda and Mayo 
\cite{kolda2011shifted, kolda2014adaptive}.
The HOPM was derived as a way to compute the best rank-1 approximation of a symmetric tensor under the squared error loss,
\begin{equation}
\argmin_{\xv\in\R^n}\nrm{ \mathcal T-\xv \otimes \ldots \otimes \xv}_F^2 = \argmin_{\xv\in\R^n} \sum_{\im=1}^n (t_{\im}-\xim)^2.
\end{equation}
Although the above problem is non-convex and has no closed form solution, it was shown in \cite{Lathauwer_2000} that it is equivalent to finding the vector $\xast$ with $\nrm{\xast}=1$ that maximizes the objective function $\mu(\xv)$ in \eqref{eq:induced_function}.
To compute $\xast$, the following generalization of the matrix power method to high-order tensors was proposed. Starting from a (random) initial  guess $\xv_{(0)}\in S_{n-1}$, HOPM iterates
\beqn
\label{eq:S-HOPM_iterate}
\xv_{(k+1)} = \frac{\nabla_{\xv} \mu(\xk)}{{\|\nabla_{\xv} \mu(\xk)\|}} = \frac{\T(I,\xk,\ldots,\xk)}{{\|\T(I,\xk,\ldots,\xk)\|}}.
\eeqn
It was shown in \cite[Theorem 4]{kofidis2002best} that for an even-order tensor, if its associated function $\mu(\xv)$ is {\em convex} or {\em concave}, then HOPM is guaranteed to converge to a local optimum of $\mu(\xv)$ in $S_{n-1}$. 
%\begin{algorithm}[h!]
%	\caption{The higher order power method} \label{algo:power}      
%	\begin{algorithmic}[1]
%		\STATE Initialization: Randomly choose $\bm  \theta^{(0)} \in \mathds R^n$ with $\|\bm \theta^{(0)}\|=1$.
%		\FOR {$k=1,\ldots,$}
%		\STATE $\bm \theta^{(k)} = {\mathcal T(I,\bm \theta^{(k-1)},\bm \theta^{(k-1)})}\Big/{\|\mathcal T(I,\bm \theta^{(k-1)},\bm \theta^{(k-1)}\|}$
%		\STATE $\lambda^{(k)} = \T(\bm \theta^{(k)},\bm \theta^{(k)},\bm \theta^{(k)})$
%		\ENDFOR         
%		\STATE return $(\bm \theta^{(k)},\lambda^{(k)} )$      
%	\end{algorithmic}
%\end{algorithm}
For general symmetric tensors, however, HOPM has no convergence guarantees, and may indeed fail to converge, see \cite{kofidis2002best} for a specific example.

To overcome the limitations of HOPM, \cite{kolda2011shifted,kolda2014adaptive} proposed the shifted function  
\begin{equation}
\mu_\alpha(\xv) = 
\mu(\xv)
%\mathcal T(\xv,\xv,\xv)
+\alpha \nrm{\xv}^{m},
\qquad \alpha\in\R.
\end{equation}
Since on the unit sphere $\mu_\alpha(\xv)=\mu(\xv)+\alpha$, the critical points of $\mu$ and $\mu_\alpha$ are identical.
Instead of  \eqref{eq:S-HOPM_iterate}, the shifted-HOPM iterates
\beq
%\label{eq:shifted_power_iteration}
\xv_{(k+1)} = \frac{\nabla_{\xv} \mu_\alpha(\xk)}{{\|\nabla_{\xv} \mu_\alpha(\xk)\|}} = \frac{\T(I,\xk,\ldots,\xk)+\alpha \xk}{{\|\T(I,\xk,\ldots,\xk)+\alpha \xk\|}}.
\eeq
Importantly, the value of 
$\alpha$ can be tuned so that from any starting point $\xz \in S_{n-1}$, the shifted-HOPM is guaranteed to converge to a critical point of $\mu_\alpha$.
\cite{kolda2014adaptive} further devised an adaptive shifted-HOPM, whereby the value of $\alpha_{(k)}$ is updated at each iteration so that $\mu_{\alpha_{(k)}}(\xv)$ is locally convex or concave around $\xk$.
This avoids the possible slowdown of the shifted-HOPM with a fixed value of $\alpha$, while maintaining its convergence guarantees.
\begin{figure}
	\includegraphics[width = 0.45\textwidth]{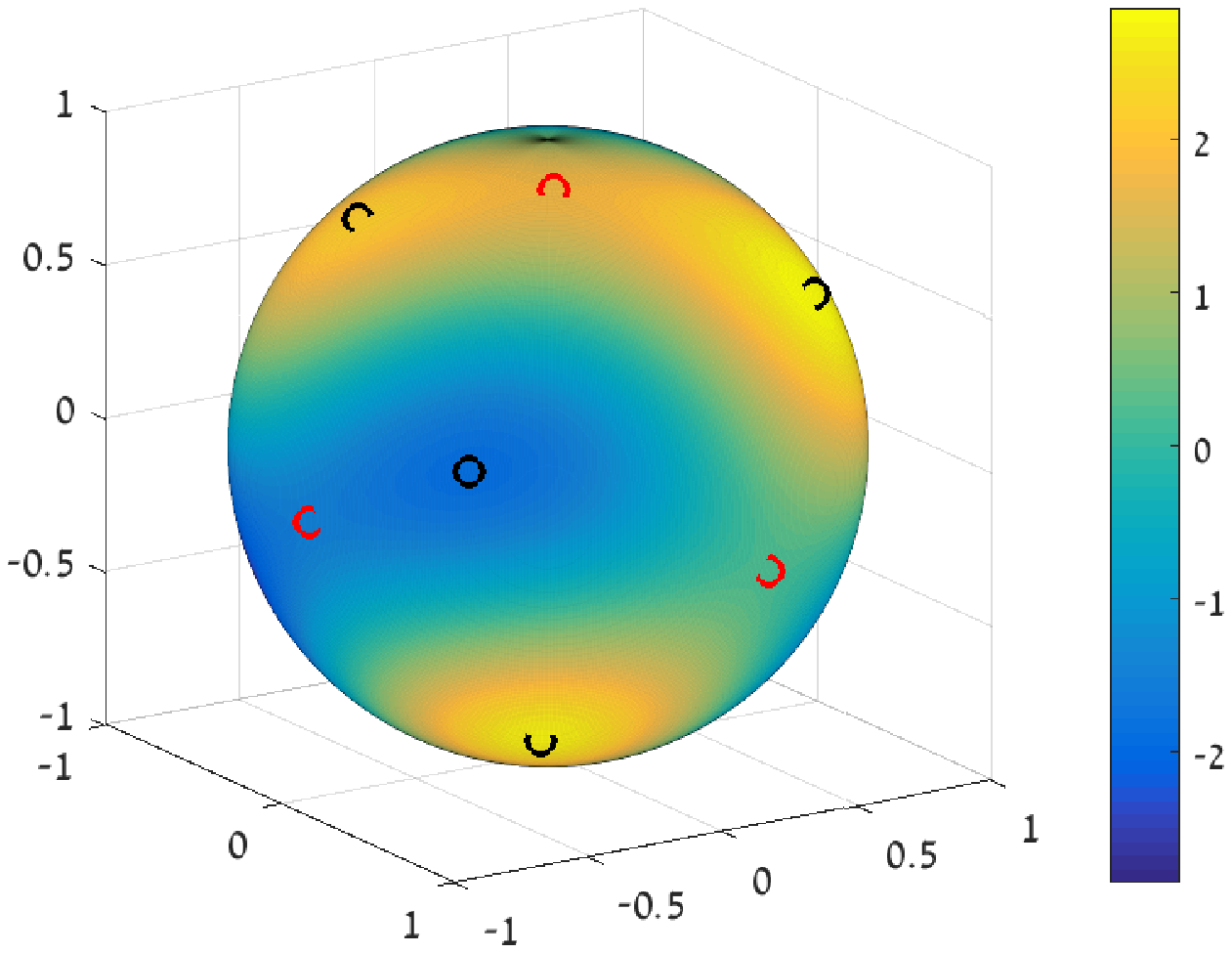}
	\hfill
	\includegraphics[width = 0.45\textwidth]{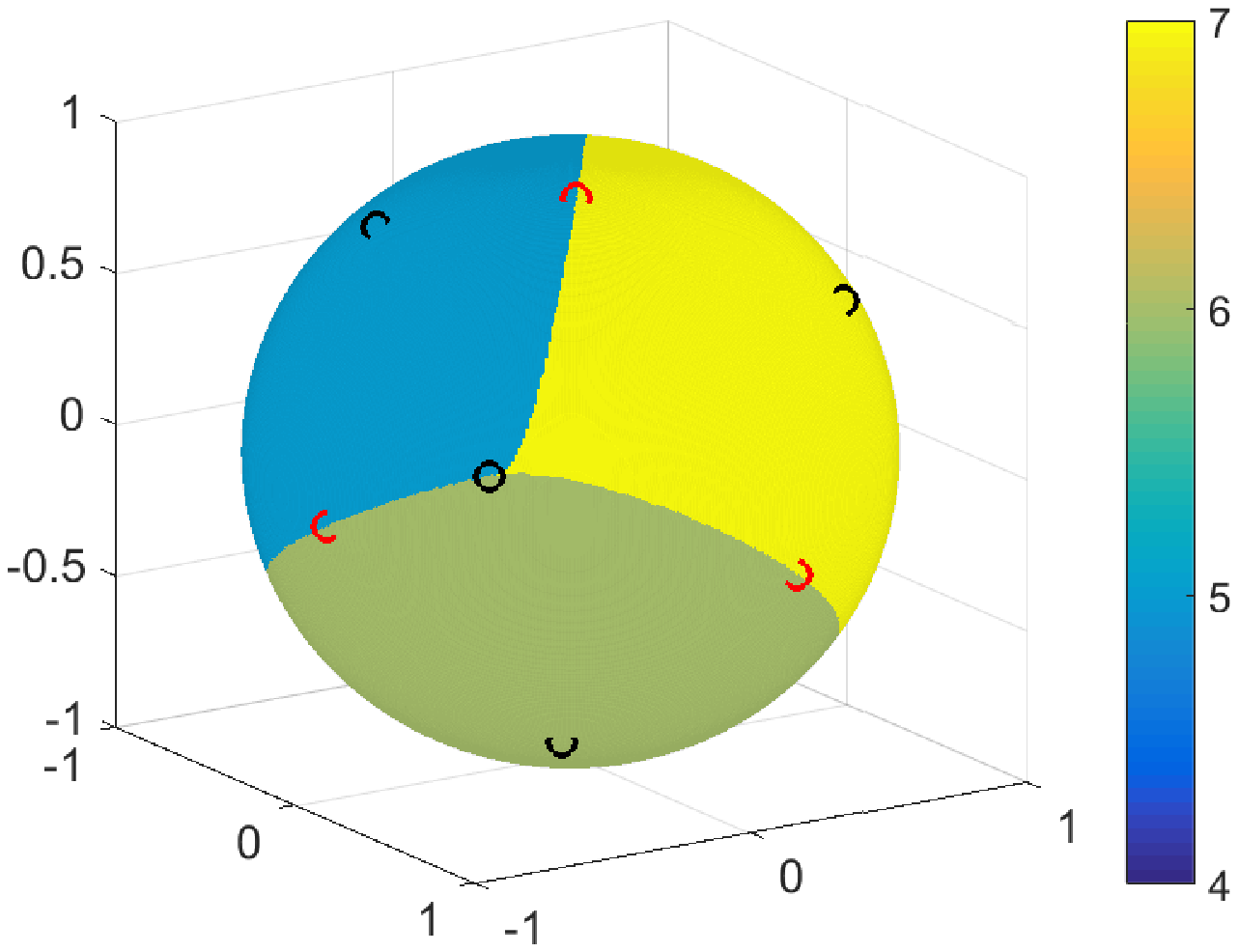}
	\caption{ Left: The value of $\mu(\xv)$ on the unit sphere for a random tensor of order $m=3$ and dimension $n=3$.	
The black/red circles indicate power-stable/power-unstable eigenvectors.
Right: Attracting regions for the adaptive HOPM.
The color of a point represents the eigenvector to which the method converged. 
%	 To apply the adaptive shifted power method we have used the code in \cite{TTB_Software}.
}
	\label{fig:cp_power_methods}    
\end{figure}
\begin{figure}
	\includegraphics[width = 0.45\textwidth]{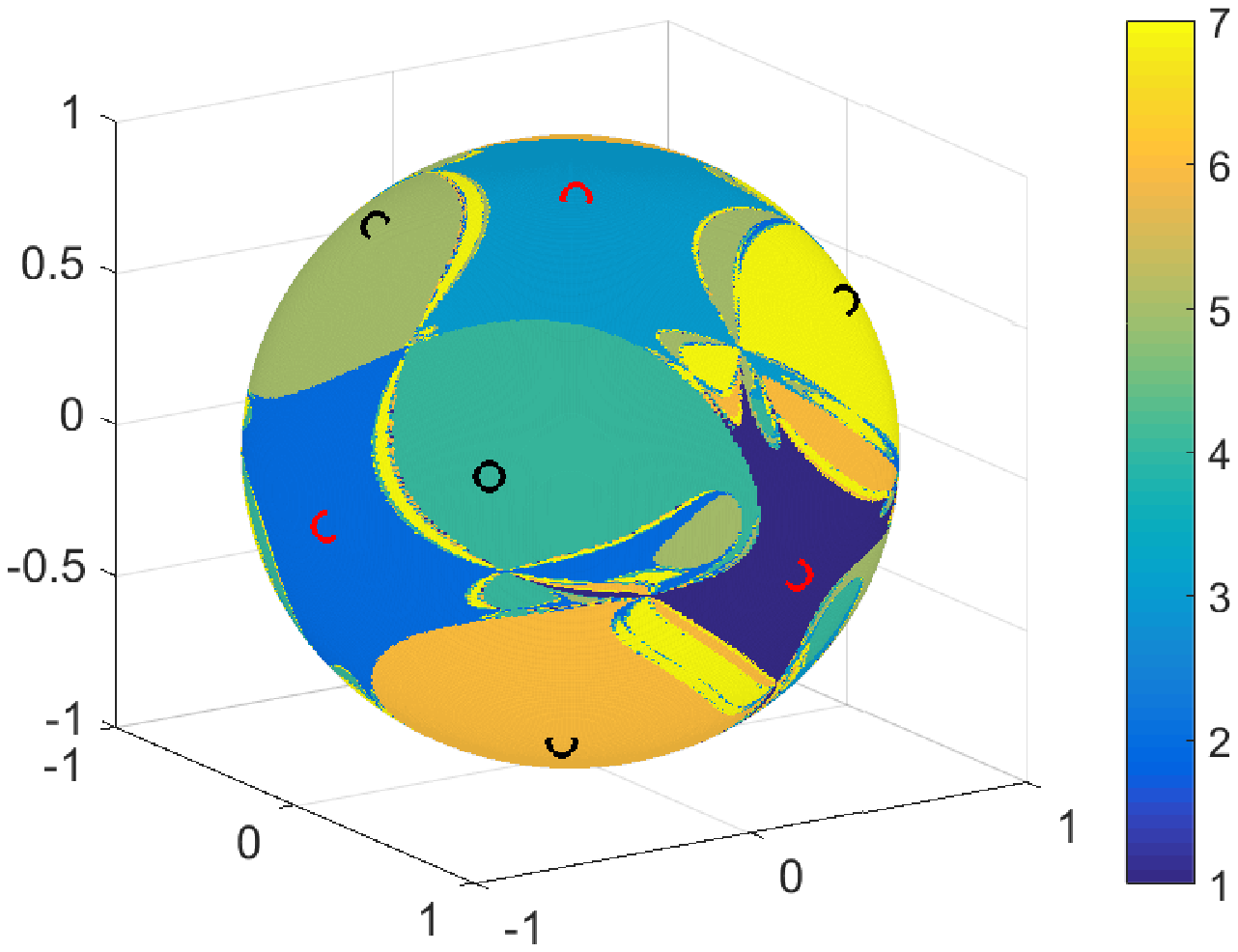}
	\hfill
	\includegraphics[width = 0.45\textwidth]{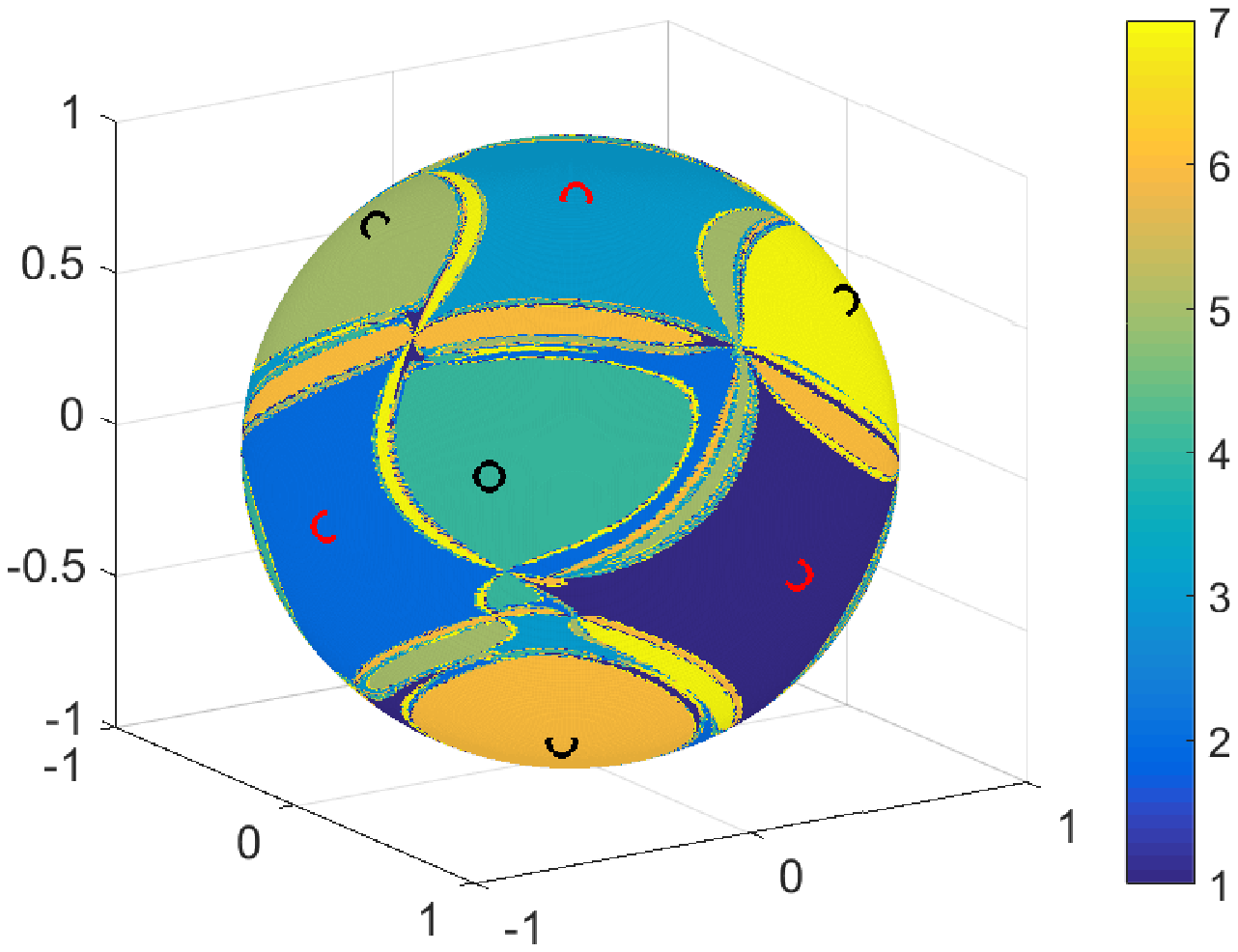}
	\caption{ Attracting regions for NCM (left) and O--NCM (right) for the same tensor as in Fig.\ref{fig:cp_power_methods}.
		%	 To apply the adaptive shifted power method we have used the code in \cite{TTB_Software}.
	}
	\label{fig:newton_attraction}    
\end{figure}
%\todo{Fig 1 right: remove tics with half value, e.g. 4.5 or remove bar entirely. Also put the central black point back into its place}
%\todo{Fig 2 left: bar contains also the value 0. Is there a point that didn't converged? remove bar entirely?}

\paragraph{Convergence, attraction regions, and stable eigenpairs}
%While The adaptive shifted-HOPM is guaranteed to converge from any initial point $\xz \in S_n$.
%Unfortunately, it converges only to {\em some} of the eigenpairs. 
The adaptive shifted-HOPM converges only to some eigenpairs of a tensor.
These may be characterized as follows.
For any $\xv\in S_{n-1}$, let $U_{\xv} \in \R^{n \times n-1}$ be a matrix with $n-1$ orthonormal columns that span the subspace orthogonal to $\xv$.
Define the \textit{projected Hessian matrix},
\begin{equation}\label{eq:projected_hessian}
	H_p(\xv) =  U_{\xv}^T H(\xv)U_{\xv} \in \R^{(n-1) \times (n-1)},
\end{equation}
where $H(\xv)\in\R^{n\times n}$ is the Hessian matrix in \eqref{eq:hessian}. In \cite{kolda2011shifted}, an eigenvector $\xv^\ast$ was termed \textit{positive-stable} if $H_p(\xast)$ is positive-definite
and \textit{negative-stable} if $H_p(\xast)$ is negative-definite. Otherwise, $\xast$ is termed {\em unstable}.
\cite{kolda2011shifted} showed that the shifted HOPM does not converge to unstable eigenvectors but does converge to the stable ones. Further, the convergence is at a linear rate. 
%It was then shown that only the {\em stable} eigenvectors of $\T$ are \textit{attracting fixed points} of the shifted-HOPM iterations. 
%, provided an appropriate value of $\alpha$ is chosen.
%In contrast, the shifted-HOPM is unable to converge to unstable eigenvectors. 
To distinguish between eigenpairs that are stable for the (adaptive) shifted-HOPM
and those that are stable for the Newton-based methods, we henceforth refer
to the above as power-stable eigenpairs, power-unstable eigenpairs, etc.
%To distinguish between eigenpairs that are ``stable'' for the (adaptive) SS-HOPM
%, henceforth termed {\em Power-stable}, 
%and eigenpairs that are ``stable'' for the Newton-based methods we henceforth refer to the above as {power-stable} eigepairs, power-unstable eigenpairs, etc.

As an example, the left panel of Figure \ref{fig:cp_power_methods} shows the value of $\mu(\xv)$ over the unit sphere for a $3 \times 3 \times 3$ symmetric tensor with $7$ real eigenvectors.
Three eigenvectors, depicted in red, are power-unstable, while the remaining four, depicted in black, are power-stable. The right panel of the same figure shows the results of the adaptive shifted-HOPM. The color indicates the eigenvector to which the method converged, starting from various locations on the unit sphere.
The figure shows clear convergence regions around $3$ of the power-stable eigenpairs.
The region around the fourth power-stable eigenpair appears on the back side of the sphere.
In agreement with theory, the adaptive HOPM did not converge to any of the three power-unstable eigenpairs. 

\section{Newton-based methods for the tensor eigen-problem}\label{sec:newton_based}
Given the limitations of the aforementioned methods,
our goal is to derive a fast iterative algorithm that under mild assumptions is able to converge to \textit{all}
real eigenpairs of a symmetric tensor.
To this end, we develop a Newton-based method.

%Generally speaking, Newton-based methods \cite{bonnans2006numerical} aim at finding roots of $f(\xv)=0$ for a differentiable function $f(\xv)$ in an iterative fashion. 
%In its most known form, $f$ is defined over the real line; given current approximation $x_{(k)}$ to the root, 
%it updates
%%each iteration updates
%\beq
%x_{(k+1)} = x_{(k)} -\frac{f(x_{(k)})}{f'(x_{(k)})},
%\eeq
%hopefully until convergence is reached.
%Typically, one would like to avoid computing the derivative, $f'$.
%In this case, an expansion of $f$ is considered.
\subsection{Newton correction method}\label{sec:newton}
Several variants of Newton's method were derived for the symmetric matrix eigen-problem, see for example \cite[Chapter 6]{stewart2001matrix}.
Here we derive a Newton-based method
for computing the eigenpairs of symmetric tensors.
Recently, \cite{guo2017modified} considered a similar approach for finding some nonnegative eigenpairs of a nonnegative tensor.

Let $(\xast,\lamast)$ be an eigenpair of a symmetric tensor $\T$ of order $m$ and dimensionality $n$.
Given an approximation $\xv$ to $\xast$, our goal is to obtain an improved approximation $\xv'$ (see Figure \ref{fig:ONCM_def}, left).
%(see Figure \ref{fig:ONCM_def}, left)
Denote the {\em exact} unknown correction by $\yast = \xast - \xv$ and
recall $\mu(\xv)=\T(\xv,\ldots,\xv)$. Since $\xast = \xv + \yast$ and  $\lamast = \mu(\xast) = \mu(\xv + \yast)$, the eigen-problem in (\ref{eq:fixed_point_def}) can be written as
\beqn
\label{eq:ev_full}
\T(I, \xv + \yast,\ldots, \xv + \yast) = \mu(\xv + \yast) \cdot (\xv + \yast).
\eeqn
Recalling the Hessian matrix $H(\xv)$ in \eqref{eq:hessian}, we define the matrix $A(\xv) \in \R^{n \times n}$ by
\begin{equation}
\label{eq:A_def}
                A(\xv)=  H(\xv) - m \xv \T(I,\xv,\ldots,\xv)^T.
\end{equation}
Setting apart the terms that are linear in $\yast$,
Eq. \eqref{eq:ev_full} takes the form
\beqn
\label{eq:system_quad_simple}
A(\xv) \yast = -\grad(\xv) + \Delta(\xv,\yast),
\eeqn
where $\grad(\xv)$ is given in \eqref{eq:gradient}.
Here, $\Delta(\xv,\yast)$ accounts for all high order terms in $\yast$,
\begin{flalign}
\nonumber
%\begin{split}
\Delta(\xv,\yast) &= \sum_{i=2}^m \binom{m}{i}\T(\underbrace{\xv,\ldots,\xv}_{m-i},\underbrace{\yast,\ldots,\yast}_{i})\xv + \sum_{i=1}^m \binom{m}{i}\T(\underbrace{\xv,\ldots,\xv}_{m-i},\underbrace{\yast,\ldots,\yast}_{i})\yast
\\&- \sum_{i=2}^{m-1} \binom{m-1}{i}\T(I,\underbrace{\xv,\ldots,\xv}_{m-i-1},\underbrace{\yast,\ldots,\yast}_{i}).
%\end{split}
\label{eq:simple_high_order_terms}
\end{flalign}

\begin{algorithm}[t]   
        \caption{Newton correction method}\label{algo:newton_correction}   
        \begin{algorithmic}[1]
                \STATE Input: tensor $\T\in\R^{n\times\dots\times n}$, tolerance parameter $\delta>0$
                \STATE Initialization: Randomly choose $\xz \in S_{n-1}$
                \STATE Set $k=0$ and $\lambda_{(0)} = \mu(\xz)$ % \T(\xz,\xz,\xz)$      
                \WHILE {$\|\xv_{(k)}-\xv_{(k-1)}\|>\delta$}
                \STATE Compute $\yk = -A(\xk)^{-1}\bm g(\xk)$
                \STATE Set $\xkpo = (\xk+\yk)/{\|\xk+\yk \|}$ 
                \STATE Set $\lambda_{(k+1)} = \mu(\xkpo)$       
                \STATE $k \leftarrow k+1$
                \ENDWHILE      
                \RETURN $(\xk,\lambda_{(k)})$      
\end{algorithmic}\end{algorithm}

By definition, the solution $\yast$ to \eqref{eq:system_quad_simple} satisfies $\xv + \yast = \xast$.
However, solving \eqref{eq:system_quad_simple} exactly for $\yast$ is as difficult as finding the eigenpair $(\xast,\lamast$) of the tensor $\T$ we started from.
Instead, we devise an iterative Newton correction method (NCM) that solves \eqref{eq:system_quad_simple} only approximately.
% \cite[Chapter 6]{stewart2001matrix}. 
Given the approximation $\xk$ of $\xast$ at the $k^{\text{th}}$ iteration, NCM computes a new approximation $\xkpo$ by neglecting the high order terms $\Delta(\xv,\yast)$ in \eqref{eq:system_quad_simple}. 
This amounts to solving the system of $n$ linear equations
\begin{equation}\label{eq:system_lin_simple}
A(\xk) \yk = -\grad(\xk).
\end{equation} 
Assuming $A(\xk)$ is invertible, the unique solution to \eqref{eq:system_lin_simple} is given by
%$\yk = -A(\xk)^{-1}\grad(\xk)$.
\beqn
\label{eq:system_lin_simple_solution}
\yk = -A(\xk)^{-1}\grad(\xk).
\eeqn
The new approximation $\xkpo$ for $\xast$ is then
\beqn
\label{eq:normalized_update}
\xkpo = \frac{\xk +\yk}{{\|\xk +\yk\|}}.
\eeqn
%The left panel of Figure \ref{fig:ONCM_def} illustrates the NCM  update of $\xk$.
Given an initial guess $\xz\in S_{n-1}$, NCM iterates steps \eqref{eq:system_lin_simple_solution} and \eqref{eq:normalized_update}.
Once a stopping condition is met, the pair $(\xk,\mu(\xk))$ is returned;
see Algorithm \ref{algo:newton_correction}.
%The method is summarized in algorithm \ref{algo:newton_correction}.

The left panel of Figure \ref{fig:newton_attraction} shows the convergence regions of NCM for the eigenpairs of the same tensor as in Figure \ref{fig:cp_power_methods}.
In this case, all eigenpairs are attracting points of NCM and can thus be found by running  Algorithm \ref{algo:newton_correction} multiple times with different (random) initial guesses.

\subsubsection*{Convergence guarantees}
\label{sec:convergence}
Two questions regarding NCM are (i) to which eigenpairs of $\T$ the method can converge to? and (ii) what is the convergence rate?
To answer these questions we recall the definition of the Hessian matrix in (\ref{eq:hessian}). Given an eigenpair $(\xast,\lamast)$, its corresponding Hessian matrix 
%in \eqref{eq:hessian} 
is
% the Hessian matrix,
% $H(\xast,\lamast)$ defined in \eqref{eq:hessian},
%       \begin{equation}
%\label{eq:hessian_ast}
\[
H(\xast) = (m-1)\T(I,I,\xast,\ldots,\xast)-\lamast I.
\]
%       \end{equation}
Note that $H(\xast)$ is symmetric and has $\xast$ as an eigenvector with eigenvalue $(m-2)\lamast$. 
Denote by $\mu_1^\ast, \ldots, \mu_{n-1}^\ast$ the other $n-1$ eigenvalues of $H(\xast)$.
%Since $\xast$ is an eigenvecor of $H(\xast)$, 
By definition, these are the eigenvalues of the projected Hessian $H_p(\xast)$ in \eqref{eq:projected_hessian}.

\begin{definition}
\label{def:g-stable}
For $\g>0$, an eigenpair $(\xast,\lamast)$ is \emph{$\g$-{Newton-stable}} if all eigenvalues of $H_p(\xast)$ in absolute value are at least $\g$, namely, $\min_{i}|\mu_{i}^*| \geq \g$.
\end{definition}

%Definition \ref{def:g-stable} implies
Note that $H_p(\xast)$ is full rank iff $\xast$ is $\g$-Newton-stable for some $\gamma>0$.
Similarly, $H(\xast)$ is full rank iff $\xast$ is $\g$-Newton-stable for some $\g>0$ and $\lamast \neq 0$.
%$\lamast \neq 0$, then $H(\xast)$ is also full rank.
%, Algorithm \ref{algo:newton_correction}.
%One can show (see, for example, \cite{zhang2012best}) that for any $\T$, there exists an $M < \infty$ such that for any $\xv\in S_n$, $\yv\in\R^n$, and $k \leq m$,
%\beq
%\|\T(I,\xv,\ldots,\xv,\underbrace{\yv,\dots,\yv}_{k \text{ times}})\| \leq M \nrm{\yv}^k.
%\eeq
We have the following convergence guarantee for NCM. The proof is given in Appendix \ref{sec:NCM_proofs}.

\begin{theorem}
\label{lem:simple_converge}
Let $(\xast,\lamast)$ be an eigenpair of a symmetric tensor $\T$. Suppose that $(\xast,\lamast)$ is $\g$-Newton-stable and that $\lamast \neq 0$.
Then there exists an $\varepsilon = \varepsilon(\g,\lamast) >0$
% and a universal  constant $C>0$ 
 such that for any $\xz$ that satisfies $\nrm{\xz-\xast}<\varepsilon$, the sequence $\xz, \xv_{(1)}, \dots$, computed by Algorithm \ref{algo:newton_correction} converges to $\xast$ at a quadratic rate.
\end{theorem}

\subsection{Orthogonal Newton correction method} \label{sec:newton_orthogonal}
\begin{figure}%
\centering
\hspace{1.2cm}
\begin{tikzpicture}
  [
    scale=2.8,
    >=stealth,
    point/.style = {draw, circle,  fill = black, inner sep = 1pt},
    dot/.style   = {draw, circle,  fill = black, inner sep = .2pt},
    brace/.style={decoration={brace, mirror, raise=3pt}, decorate}
  ]

  % the circle
  \def\rad{1}
%  \node (origin) at (0,0) [point, label = {left:$\bm 0$}]{};
  \node (origin) at (0,0) [point]{};	    
  \draw [thick,domain=-0:90] plot ({cos(\x)}, {sin(\x)});  

  % triangle nodes: just points on the circle
  \node (n1) at +(65:\rad) [point, label = above:$\xv$] {};
  \node (n2) at +(15:\rad) [point] {};  
  \node (n3) at +(30:\rad) [point] {};
  %\node (n3) at +(30:\rad) [point, label = -2 :$\frac{\xv+\bm u}{\nrm{\xv+\bm u}}$] {};
  \node (u) at ($ (origin) ! 1.5 ! (n3) $) [point] {};
  %\node (uast) at ($ (origin) ! .66 ! (n2) $) [point] {};
  %\node (uast) at ($ (origin) ! .66 ! (n2) $) [point] {};
  \node  at ([shift={(-5:.4)}] n3) {$\xv' =\frac{\xv+\yv}{\nrm{\xv+\yv}}$};
  %\node  at ([shift={(165:.2)}] n2) {$\yast$};
  \node  at ([shift={(-0:.13)}] n2) {$\xast$};
  % triangle edges: connect the vertices, and leave a node at the midpoint
  \draw[->] (origin) --  (n1);
  \draw[->] (origin) --  (n2);
  \draw[->] (origin) --  (n3);  
  \draw[->] (n1) -- node [ left = 3pt] {$\yast$} (n2);
%  node (d)
%  % [label = left: $\bm u^*$]
%   {$\bm u^*$} ($ (origin) ! .8 ! (n2) $);
  \draw[->] (n1) -- node [above right] {$\yv$} (u);
  \draw[dotted] (origin) -- (u);
  %\draw[dashed] (n2) -- (n1);
\path  (origin) --  node [below = 4pt] {} (1,0);
  % Bisectors
%  \draw[dotted] ($ (origin) ! 1 ! (\rad,0) $) node [right] {} -- ($(\rad,0) ! 1 ! (origin)$ );
  % similarly for origin and b
%  \draw[dotted] ($ (origin) ! 1 ! (0,\rad) $)-- ($(0,\rad) ! 1 ! (origin)$ ) node [right] {};

%\draw [brace] (origin) --  node [below = 4pt] {$\alpha$} (uast);

% Right angle symbols
%  \def\ralen{.5ex}  % length of the short segment
%  \foreach \inter/\first/\last in {uast/origin/n1, n1/origin/u}
%    {
%      \draw let \p1 = ($(\inter)!\ralen!(\first)$), % point along first path
%                \p2 = ($(\inter)!\ralen!(\last)$),  % point along second path
%                \p3 = ($(\p1)+(\p2)-(\inter)$)      % corner point
%            in
%              (\p1) -- (\p3) -- (\p2)               % path
%              %($(\inter)!.5!(\p3)$) node [dot] {};  % center dot
%              ;
%    }
\end{tikzpicture}
\hspace{1.5cm}
\begin{tikzpicture}
  [
    scale=2.8,
    >=stealth,
    point/.style = {draw, circle,  fill = black, inner sep = 1pt},
    dot/.style   = {draw, circle,  fill = black, inner sep = .2pt},
    brace/.style={decoration={brace, mirror, raise=3pt}, decorate}
  ]

  % the circle
  \def\rad{1}
  \node (origin) at (0,0) [point]{};
  %\draw (origin) circle (\rad);
  \draw [thick,domain=-0:90] plot ({cos(\x)}, {sin(\x)});  

  % triangle nodes: just points on the circle
  \node (n1) at +(65:\rad) [point, label = above:$\xv$] {};
  \node (n2) at +(15:\rad) [point, label = right:$\xast$] {};  
  \node (n3) at +(30:\rad) [point] {};
  %\node (n3) at +(30:\rad) [point, label = -2 :$\frac{\xv+\bm u}{\nrm{\xv+\bm u}}$] {};
  \node (u) at ($ (origin) ! 1.21 ! (n3) $) [point] {};
  \node (uast) at ($ (origin) ! .66 ! (n2) $) [point] {};
  \node  at ([shift={(-7:.4)}] n3) {$\xv' = \frac{\xv+\bm u}{\nrm{\xv+\bm u}}$};
  %\node  at ([shift={(-30:.12)}] uast) {$\bm u^*$};
  % triangle edges: connect the vertices, and leave a node at the midpoint
  \draw[->] (origin) --  (n1);
  \draw[->] (origin) --  (n2);
  \draw[->] (origin) --  (n3);  
  \draw[->] (n1) -- node [ left = 0pt] {$\bm u^*$} (uast);
%  node (d)
%  % [label = left: $\bm u^*$]
%   {$\bm u^*$} ($ (origin) ! .8 ! (n2) $);
  \draw[->] (n1) -- node [above right] {$\bm u$} (u);
  \draw[dotted] (origin) -- (u);
  %\draw[dashed] (n2) -- (n1);

  % Bisectors
%  \draw[dotted] ($ (origin) ! 1 ! (\rad,0) $) node [right] {} -- ($(\rad,0) ! 1 ! (origin)$ );
  % similarly for origin and b
%  \draw[dotted] ($ (origin) ! 1 ! (0,\rad) $)-- ($(0,\rad) ! 1 ! (origin)$ ) node [right] {};

\draw [brace] (origin) --  node [below = 4pt] {$\alpha$} (uast);

% Right angle symbols
  \def\ralen{.5ex}  % length of the short segment
  \foreach \inter/\first/\last in {uast/origin/n1, n1/origin/u}
    {
      \draw let \p1 = ($(\inter)!\ralen!(\first)$), % point along first path
                \p2 = ($(\inter)!\ralen!(\last)$),  % point along second path
                \p3 = ($(\p1)+(\p2)-(\inter)$)      % corner point
            in
              (\p1) -- (\p3) -- (\p2)               % path
              %($(\inter)!.5!(\p3)$) node [dot] {};  % center dot
              ;
    }
\end{tikzpicture}
\caption{Illustration of one iteration of NCM (left) and O--NCM (right).}%
\label{fig:ONCM_def}%
\end{figure}
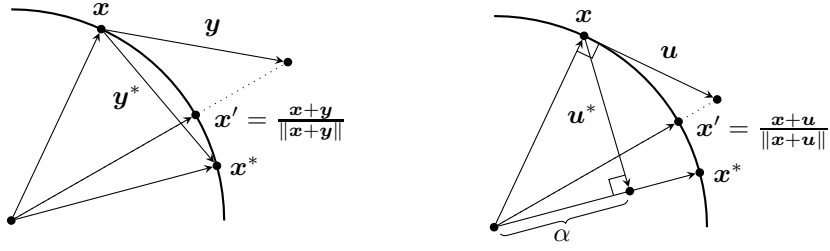
As discussed above, the NCM method may not converge to an eigenvector with eigenvalue $\lamast=0$. To remove this limitation, we now develop an orthogonal NCM variant.
%The convergence of  NCM was shown
%under the assumption that the eigenpair $(\xast, \lamast)$ is Newton-stable and $\lamast$ is bounded away from zero (see Theorem \ref{lem:simple_converge}).
%However, the latter condition can be removed by considering an orthogonal version of the algorithm as follows.
Given an approximation $\xv\in S_{n-1}$ of $\xast$, we first decompose it into
its projection onto $\xast$ and a residual (see Figure \ref{fig:ONCM_def}, right),
\begin{equation}\label{eq:theta_k}
\xv = \alpha \xast - \ortc^*,
%\quad \text{and} \quad
%(\ortc^*)^T \xast = 0
\end{equation}
where $\alpha = \xv^T \xast$ and $\ortc^* = \alpha \xast - \xv$ is the residual.
Since $\xast$ and $\ortc^*$ are orthogonal and $\xv\in S_{n-1}$, $\nrm{\xv}^2=\alpha^2+\nrm{\ortc^*}^2 =1$.
For reasons to become clear shortly, 
%In addition to the correction $\ortc^*$, 
we also introduce a correction $\beta^*\equiv\beta^*(\xv,\xast)$ to the eigenvalue $\lambda^{\ast}$, defined as
\beqn
\label{eq:beta_ast}
%\mu(\xv) = \alpha^{m-2}\lambda^{\ast} -\beta^*.
\beta^* =  \alpha^{m-2}\lambda^{\ast} - \T(\xv,\dots,\xv)= \alpha^{m-2}\lambda^{\ast} - \mu(\xv).
\eeqn
When $\xv = \xast$, we have $\ortcast =0$, $\alpha = 1$ and $\beta^* =0$.
Since $(\xast,\lamast)$ is an eigenpair,
%\begin{align}\begin{split}
\[
\T(I,\alpha \xast,\ldots,\alpha\xast) = 
\alpha^{m-1}\lambda^\ast \xast.
\]
Inserting $\xast = \frac{1}{\alpha}(\xv + \ortc^*)$ and 
%the definition of 
$\beta^*$ 
%in (\ref{eq:beta_ast}) 
into the above equation gives
%So the tensor eigen-problem in (\ref{eq:fixed_point_def}) takes the form
% (cf. \eqref{eq:ev_full})
\begin{align}
\begin{split}
\T(I,\xv +\ortc^*,\ldots,\xv +\ortc^*) = (\mu(\xv) +\beta^*)\cdot (\xv +\ortc^*).
\end{split}
\end{align}
%Treating $\ortcast$ and $\beta^*$ as small, we would like 
%\end{split}\end{align}
%Similarly to the derivation of the NCM in Section \ref{sec:newton},
We set apart the terms that are linear in $\ortc^*$ and $\beta^*$ to obtain
\newcommand{\tD}{\tilde \Delta}
\begin{equation}\label{eq:lin_exact_part}
%(2\T(I,I,\xv)-\mu(\xv) I)\, 
H(\xv)\ortc^* -\beta^* \xv =  
-\grad(\xv)
%\mu(\xv) \xv - \T(I,\xv, \xv) 
 % - \tbinom{m-1}{2} \T(I,\xv,\ldots,\xv,\ortc^*,\ortc^*)
+ \tD(\xv,\ortc^*, \beta^*),
%  + O(\nrm{\ortc^*}^2).
\end{equation}
where $\grad$ and $H$ were defined in (\ref{eq:gradient}) and (\ref{eq:hessian}) respectively and $\tD(\xv,\ortc^*, \beta^*)$ includes all the remaining higher order terms in $(\ortc^*,\beta^*)$,
\beqn
\label{eq:Delta_ONCM}
\tD(\xv, \ortc^*, \beta^*) =  \beta^* \ortc^*  - \sum_{i=2}^{m-1} \tbinom{m-1}{i}\T(I,\underbrace{\xv,\ldots,\xv}_{m-i-1},\underbrace{\ortc^*,\ldots,\ortc^*}_{i}).
\eeqn
%Let $\bm b(\xv)$ be as in \eqref{eq:b_def} and recall $H(\xv) = 2\T(I,I,\xv)-\mu(\xv) I$.
%define 
%%\todo{$\tA(\xv) = H(\xv)$}
%\beq
%\tA(\xv) = 2\T(I,I,\xv)-\mu(\xv) I.
%\eeq
Combining \eqref{eq:lin_exact_part} with the orthogonality condition, \( (\xast)^T \ortc^* \propto (\xv + \ortc^*)^T \ortc^* = 0\), gives the following set of non-linear equations in $(\ortc^\ast,\beta^*)$,
\beqn
\label{eq:system_quad}
{\begin{pmatrix}
H(\xv) & - \xv \\
\xv^T & 0 \\
\end{pmatrix}
\begin{pmatrix}
\ortc^* \\ \beta^*
\end{pmatrix}} =
%\\
%\nonumber
%&&\qquad
-{\begin{pmatrix}
\grad(\xv) \\ 0
\end{pmatrix}
+
% \tDelta(\xv,\ortc^*,\beta^*)},
\begin{pmatrix}
\tD(\xv,\ortc^*, \beta^*)
%\beta^* \ortc^* + O(\nrm{\ortc^*}^2)
  \\ -\nrm{\ortc^*}^2
%\beta^* \ortc^* - \binom{m-1}{2}\T(I,\xv,\ldots,\xv,\ortc^*,\ortc^*)+ O(\|\ortc^*\|^3)  \\ -\|\ortc^*\|^2
\end{pmatrix}}.
\eeqn
%where 
%\beqn
%\label{eq:high_order_ort}
%\tDelta(\xv,\ortc^*,\beta^*) = \begin{pmatrix}
%\beta^* \ortc^* - \binom{m-1}{2}\T(I,\xv,\ldots,\xv,\ortc^*,\ortc^*)+ O(\|\ortc^*\|^3)  \\ -\|\ortc^*\|^2
%\end{pmatrix}.
%\eeqn
By construction, the solution $(\ortc^\ast,\beta^*)$ to \eqref{eq:system_quad} satisfies $(\xv +\ortc^\ast)/\nrm{\xv +\ortc^\ast}  = \xast$. 
%However, since \eqref{eq:system_quad} is non-linear, one cannot obtain a closed form solution for $(\ortc^\ast,\beta^*)$.
Similarly to the NCM, we neglect the high order terms in the right hand side of (\ref{eq:system_quad}) and solve the system of linear equations
in the $n+1$ unknowns $(\ortc,\beta)$,
\begin{equation}\label{eq:system_lin}
\begin{pmatrix}
H(\xv) & - \xv \\
\xv^T & 0 \\
\end{pmatrix}
\begin{pmatrix}
\ortc \\ \beta
\end{pmatrix}=
-\begin{pmatrix}
\grad(\xv) \\ 0
\end{pmatrix}.
\end{equation} 
%Since \eqref{eq:system_lin} explicitly involves the matrix $\tA(\xv)$,  %\eqref{eq:system_lin_simple}, 
%one might mistakenly conclude that \eqref{eq:system_lin} gains us nothing over \eqref{eq:system_lin_simple}.

\begin{algorithm}[t]   
        \caption{Orthogonal Newton correction method}\label{algo:newton_orthogonal}   
        \begin{algorithmic}[1]
                \STATE Input: tensor $\T$, threshold $\delta$
                \STATE Initialization: Randomly choose $\xz \in S_{n-1}$
                \STATE Set $k=0$ and $\lambda_{(0)} = \mu(\xz)$%\T(\xv^{(0)},\xv^{(0)},\xv^{(0)})$
                \WHILE {$\|\xv_{(k)}-\xv_{(k-1)}\|>\delta$}
                \STATE Compute $\ortc_{(k)}= - U_{\xk} H_p(\xk)^{-1} U_{\xk}^T \bm \grad(\xk)$
                \STATE Set $\xkpo = (\xk+\ortc_{(k)})/\nrm{\xk+\ortc_{(k)}}$
                \STATE Set $\lambda_{(k+1)} = \mu(\xkpo)$
                \STATE $k \leftarrow k+1$
                \ENDWHILE      
                \RETURN $(\xk,\lambda_{(k)})$      
\end{algorithmic}\end{algorithm}

Due to the extra variable $\beta$, \eqref{eq:system_lin} seems to be of dimension $n+1$, as opposed to the $n$ dimensional system in \eqref{eq:system_lin_simple}. However, as we now show, the system in \eqref{eq:system_lin} can be equivalently solved in the $n-1$ dimensional subspace orthogonal to $\xv$.
%It may seem that considering \eqref{eq:system_lin} does not gain much as compared to \eqref{eq:system_lin_simple}, since \eqref{eq:system_lin} explicitly involves the matrix $\tA(\xv)$.
%%Indeed, for $\xast$ with $\lamast =0$, $\tA(\xast)$ is singular.
%However, as we now show, \eqref{eq:system_lin} can be equivalently solved in the $n-1$ dimensional subspace orthogonal to $\xv$.
%First note that the last row in \eqref{eq:system_lin} ensures that $\ortc$ is orthogonal to $\xv$. In addition, one can easily check that $\bm b(\xv)$ is orthogonal to $\xv$.
More precisely, 
let $P_{\xv}^\perp = (I-\xv \xv^T)$ be the projection matrix into the subspace orthogonal to $\xv$
and let 
$U_{\xv} \in \R^{n \times (n-1)}$ have orthonormal columns
such that $P_{\xv}^\perp = U_{\xv} U_{\xv}^T$.
Recall the projected Hessian matrix $H_p(\xv) = U_{\xv}^T \tA(\xv) U_{\xv}$ in \eqref{eq:projected_hessian}.
%$ \in\R^{(n-1)\times(n-1)}$ 
%and define the projected vector $\bm d(\xv) = U_{\xv}^T \bm b(\xv)$.
%$\bm d(\xv)\in\R^{n-1}$ by
%\beq
%C(\xv) = U_{\xv}^T \tA(\xv) U_{\xv}
%%\in\R^{(n-1)\times(n-1)}
%\quad \text{and} \quad
%\bm d(\xv) = U_{\xv}^T \bm b(\xv).
%%\in\R^{n-1}.
%\eeq
The following lemma is an adaptation of \cite[Theorem 6.2.2]{stewart2001matrix} to our  setting.
Its proof is given in Appendix \ref{sec:ONCM_proofs}.
 %showing that \eqref{eq:system_lin} can be reduced to the $n-1$ dimensional subspace orthogonal to $\xv$.

\begin{lemma}
\label{lem:stewart}
A vector $\ortc \in\R^n$ satisfies \eqref{eq:system_lin} if and only if $\bz\in\R^{n-1}$ satisfies
\beqn
\label{eq:Cz_d}
H_p(\xv) \bz = - U_{\xv}^T \grad(\xv)
\quad \text{and}
\quad
\ortc = U_{\xv} \bz.
\eeqn
%\beq
%C(\xv) \bz = \bm d(\xv).
%%\qquad
%%\ortc = U_{\xv} \bz.
%\eeq
\end{lemma}

%\begin{lemma}
%\label{lem:stewart}
%       A vector $\ortc$ satisfies \eqref{eq:system_lin} if and only if it satisfies
%       \begin{equation}\label{eq:alt_lin_system}
%       P_{\xv}^\perp \tA(\xv)P_{\xv}^\perp \ortc = P_{\xv}^\perp \bm b(\bm      \theta)
%       \qquad \text{and} \qquad \ortc^T \xv = 0.
%       \end{equation}
%\end{lemma}
%Lemma \ref{lem:stewart} implies that \eqref{eq:system_lin} is equivalent to
%solving the system of $n-1$ linear equations $H_p(\xv) \bz = - U_{\xv}^T \grad(\xv)$ for $\bz\in\R^{n-1}$ and setting $\ortc = U_{\xv} \bz$.
%\beq
%\text{Solve} \quad
%C(\xv) \bz = \bm d(\xv)
%\text{ for } \bz\in\R^{n-1}
%\quad 
%\text{and set}
%\quad
%\ortc = U_{\xv} \bz.
%\eeq

Assuming $H_p(\xv)$ is invertible,
% and introducing the iteration index $k$, 
the solution to \eqref{eq:Cz_d} is 
%\begin{equation}\label{eq:ortho_update}
\(
\ortc = -U_{\xv} H_p(\xv)^{-1} U_{\xv}^T \grad(\xv).
%\ortc^{(k)} = -U_{\xv^{(k)}} H_p(\xv^{(k)})^{-1} U_{\xv^{(k)}}^T \grad(\xv^{(k)}). 
\)
%\end{equation}
%
So given the $k^{\text{th}}$ approximation $\xk$ to $\xast$, 
O--NCM computes
\begin{equation}\label{eq:ortho_update}
%\ortc = -U_{\xv} H_p(\xv)^{-1} U_{\xv}^T \grad(\xv). 
\ortc_{(k)} = -U_{\xk} H_p(\xk)^{-1} U_{\xk}^T \grad(\xk)
\end{equation}
and the new approximation is $\xkpo = (\xk +\ortc_{(k)})/\nrm{\xk+\ortc_{(k)}}$.
%Algorithm \ref{algo:newton_orthogonal} 
Given an initial $\xz$, O--NCM iterates these steps until a stopping condition is met; see Algorithm \ref{algo:newton_orthogonal}.

The right panel of Figure \ref{fig:newton_attraction} shows the convergence regions of O--NCM for the various eigenpairs of the same tensor in Figure \ref{fig:cp_power_methods}.
Similarly to NCM,
%in the left panel of Figure \ref{fig:newton_attraction}, 
in this case, all eigenpairs are attracting points of O--NCM, but with slightly different regions.
%In particular, the attraction region for the eigenvector corresponding to the lower-right red point, having eigenvalue $\lamast=???$,
%\todo{fill missing numeric if available} is
%significantly larger.
%These can thus be found by running multiple restarts of Algorithm \ref{algo:newton_orthogonal} with different (random) initial guesses.

\subsubsection*{Convergence guarantees}
We have the following convergence guarantee for O--NCM.
%, Algorithm \ref{algo:newton_orthogonal}.
It is similar to that of NCM in Theorem \ref{lem:simple_converge}, but with the condition $\lambda^*\neq 0$ removed.
The proof is given in Appendix \ref{sec:ONCM_proofs_conv}.

\begin{theorem}
\label{lem:orthogonal_converge}
Let $(\xast,\lamast)$ be a $\g$-Newton-stable eigenpair of a symmetric tensor $\T$. 
There exists an $\varepsilon = \varepsilon(\g) >0$ such that for any $\xz\in S_{n-1}$ that satisfies $\nrm{\xz-\xast}<\varepsilon$, the sequence $\xz, \xv_{(1)}, \dots$, computed by Algorithm \ref{algo:newton_orthogonal} converges to $\xast$ at a quadratic rate.
\end{theorem}

\begin{remark}
	Recall that any eigenpair $(\xast,\lamast)$ to which the shifted-HOPM method
	converges to has a Hessian matrix which is either positive definite or negative definite. In either case, this Hessian matrix has full rank, and thus by Theorem
	\ref{lem:orthogonal_converge}, is a stable fixed point of O--NCM. In other words,
	O--NCM typically converges to many more tensor eigenpairs than the shifted power method. 
However, the adaptive shifted-HOPM is guaranteed to converge from {\em any} initial point, whereas no such global convergence guarantee is currently available for the Newton-based methods.  
\end{remark}
\newcommand{\weight}{\omega}
\section{Convergence to eigenpairs  with a rank deficient Hessian}
\label{sec:fail} 
%As shown in Section \ref{sec:newton_based}, 
The sufficient condition in Theorem
% \ref{lem:simple_converge} and 
\ref{lem:orthogonal_converge} for O--NCM
% \ref{algo:newton_correction} and 
%(Algorithm \ref{algo:newton_orthogonal}) 
to converge to an eigenpair $(\xast,\lamast)$ rests on the smallest absolute eigenvalue of 
%either the Hessian matrix $H(\xast)$ or its 
the projected Hessian matrix $H_p(\xast)$.
In particular, if the eigenpair 
%$(\xast,\lamast)$ 
is $\gamma$-Newton-stable for some $\g>0$, an
% non-trivial 
attraction neighborhood around $\xast$ exists.
We now illustrate that this sufficient condition for convergence is by no means necessary. 
To this end, we analyze a simple example.
Denote $\bm 1 = (1,\ldots,1)^T\in\R^n$.
For any $\weight\in\R$, define the following cubic $n$-dimensional symmetric tensor,
\begin{equation}\label{eq:tensor_example}
\T_\weight = \sum_{i=1}^n \bm e_i \otimes \bm e_i \otimes \bm e_i + \weight(\bm 1 \otimes \bm 1 \otimes \bm 1).
%\qquad w\in\R,
\end{equation}
When $\weight=0$, $\T_\weight$ is orthogonal, having the maximally possible number of $2^n-1$ real eigenpairs. All these are Newton-stable and among them are the $n$ power-stable eigenpairs $\{(\bm e_i,1)\}_{i=1}^n$. 
Assume $n$ is odd and denote $l= \floor{n/2}$.
%For any $\A \subseteq \{1,\ldots,l\}$, define the two $n$-dimensional vectors,
% $\Av$ and $\Avc$, 
%\[
%\Av= \sum_{i \in \A} \bm e_i \quad\text{and}\quad \Avc= \sum_{i \notin \A}\bm e_i.
% \quad \text{for some} \quad \A \subseteq \{1,\ldots,l\}.
%\]
Let $N(\weight)$ be the number of real eigenpairs of $\T_\weight$.
The following proposition is proved in Appendix \ref{sec:example_proof}.%We show the following for 
%the $w$-parametrized tensors 
%$\T_\weight$.
% in \eqref{eq:tensor_example}.
%The proof is given in Appendix \ref{sec:example_proof}.

\begin{figure}
	\centering
	\includegraphics[width = 0.45\textwidth]{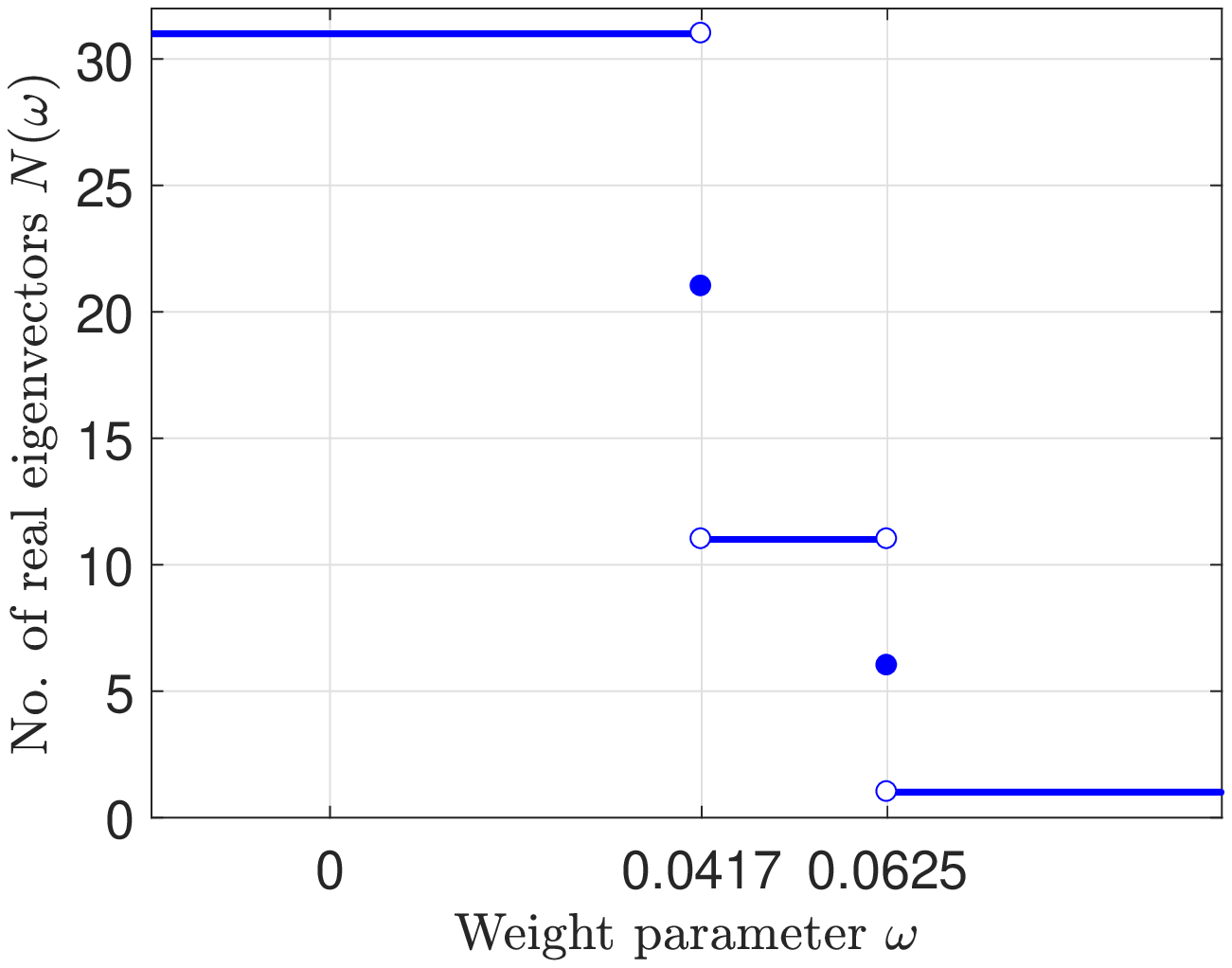}
\hfill
	\includegraphics[width = 0.45\textwidth]{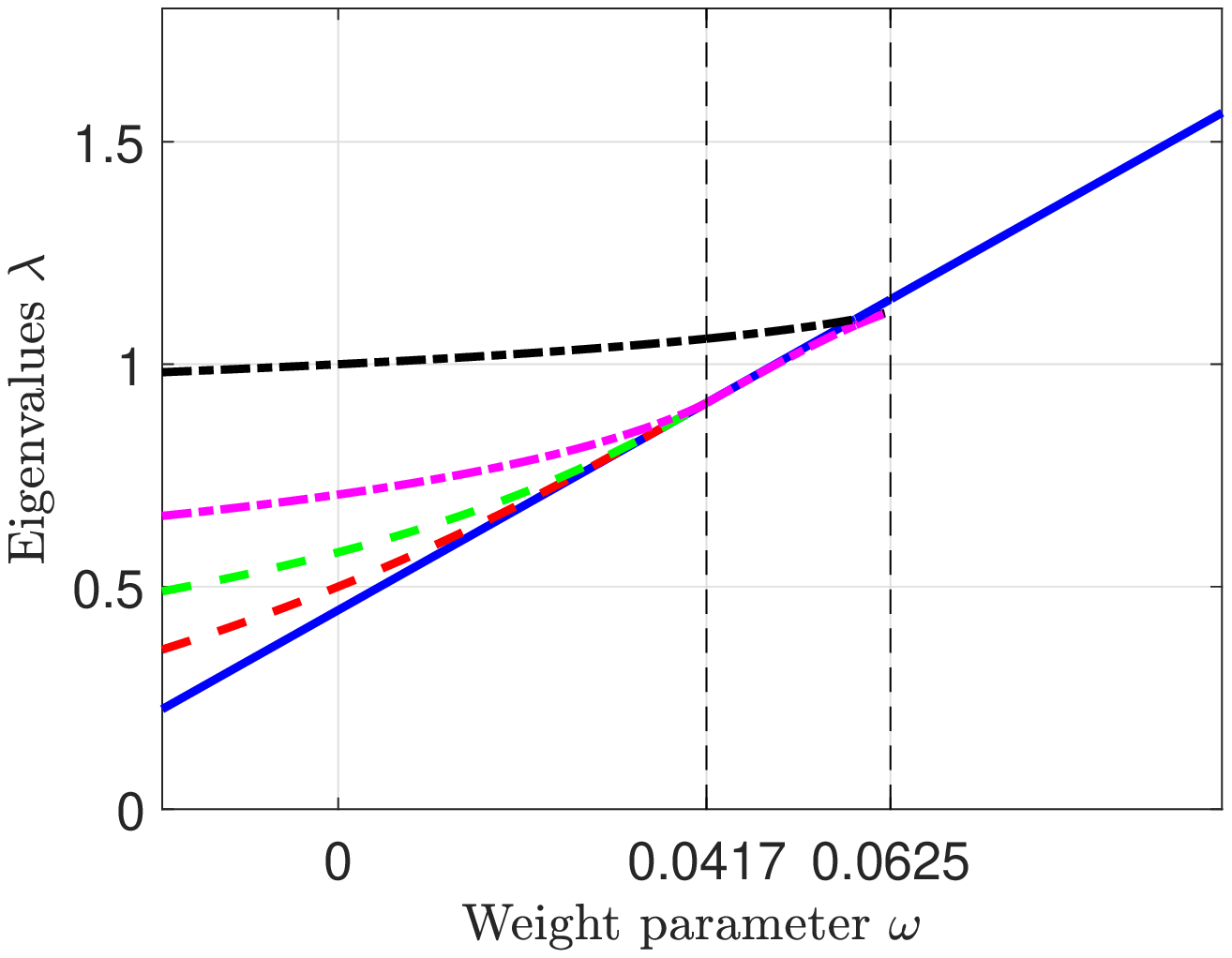}
	\caption{Number of real eigenpairs (left) and corresponding eigenvalues (right) for $\T_\weight$
%in Eq. (\ref{eq:tensor_example}) with 
with $n=5$. Although for small values of $\weight$ there are $31$ distinct eigenpairs, some of the eigenvalues are equal due to the tensor's symmetry.}	
	\label{fig:numer_of_real_eigenvalues}    
\end{figure}

\begin{proposition}\label{lem:number_real_eigenpairs}
%For any $w\in\R$, 
%The eigenpairs of $\T_\weight$ in \eqref{eq:tensor_example} satisfy the following properties:
%properties:
Define $l$ \textit{thresholds},
$\weight_i = \frac{1}{4(l - i)(n-l+i)} > 0$,  $i \in \{0,\ldots,l-1\}$.
\begin{itemize}
%	\item[(i)] There is an
%	$c \equiv c(w,|\A|)$
%	$c(w,k)$
%	$\alpha:\R\times\mathbb N\to\R$ such that all eigenvectors of $\T_\weight$ are in the form
%\begin{equation}\label{eq:eigenvalue_form}
%\xast(w,\A) \propto c(w,|\A|) \sum_{i \in \A} \bm e_i + \bm 1
%\xast(\weight,\A) \propto \alpha(\weight,|\A|)\Av  + \Avc
%,\quad \text{for some} \quad \A \subseteq \{1,\ldots,l\}.
%\end{equation}
%	where $c \equiv c(w,|\A|)$.	
\item[(i)] 
%	\[
%	w_i = \frac{1}{4(l - i)(n-l+i)} > 0, \qquad  i \in \{0,\ldots,l-1\},
%	%w_i = (4(l - i)(n-l+i))^{-1}, \qquad i = [0,\ldots,l-1],
%	\]
The number $N(\weight)$ of real eigenpairs of $\T_\weight$ of Eq. \eqref{eq:tensor_example} is
	\[
	N(\weight) =  \begin{cases}
	2^n-1 &  \weight < \weight_0\\
	%1+2\sum_{j=1}^{l-i-1} \binom{n}{j} + \binom{n}{l-i} & w = w_i \\
	1+2\sum_{j=1}^{l-i-1}\binom{n}{j} +\binom{n}{l-i}  & \weight = \weight_i \\
	1+\sum_{j=1}^{l-i-1} \binom{n}{j}  & \weight_{i}<\weight < \weight_{i+1} \\
	%1+2\sum_{j=1}^{l-i} \binom{n}{j} & w_{i-1}<w < w_{i} \\
	1 & \weight> \weight_{l-1}.
	\end{cases} 
	\]	
	\item[(ii)] %For $w\notin\{w_0,\dots,w_{l-1}\}$, all $N(w)$ real eigenpairs of $\T_\weight$ are Newton-stable, while 
For $\weight_i\in\{\weight_0,\dots,\weight_{l-1}\}$, $\binom{n}{l-i}$ out of the $N(\weight_i)$ real eigenpairs of $\T(\weight_i)$ are not \textit{Newton-stable}.
\end{itemize}
\end{proposition}

%In Figure \ref{fig:numer_of_real_eigenvalues}, 
We illustrate the above properties for $\T_\weight$ with 
%dimensionality 
$n=5$.
In this case,  $l = \floor{n/2}=2$ and there are two thresholds, $\weight_0 = \frac{1}{4\cdot 2\cdot 3} \approx 0.0417$ and $\weight_1 = \frac{1}{4\cdot 1\cdot 4}=0.0625$.
% \[
% \qquad w_0 = \frac{1}{4\cdot 2\cdot 3} \approx 0.0417,\qquad w_1 = \frac{1}{4\cdot 1\cdot 4}=0.0625.
% \]
%For $\epsilon > 0 $ and $w=w_0-\epsilon$, $T(w)$ has $N(w) = 2^n-1=31$ distinct eigenpairs. 
Figure \ref{fig:numer_of_real_eigenvalues} shows the number $N(\weight)$ of real eigenpairs (left), and the different eigenvalues of $\T_\weight$ (right) as computed by O--NCM.
%The left panel of Fig.\ \ref{fig:numer_of_real_eigenvalues} shows the number $N(\weight)$ of real eigenpairs as a function of $\weight$.
%The right panel of the same figure shows the different eigenvalues of $\T_\weight$.
As expected, at $\weight_0$ and $\weight_1$, the number of real eigenvalues decreases.
%\todo{Change back to $n=3$ and merge with next paragraph? More smooth but less interesting.}

%\begin{figure}
%	\centering
%	\includegraphics[width = 0.45\textwidth]{draw_singular_point.pdf}
%	%\includegraphics[width = 0.45\textwidth]{singular_attraction_zoom.pdf}
%	\caption{Left: $\mu(\xv_t)$ as a function of $t$ with 
%		$ \xv_t = (t\xv_a + (1-t)\xv_b)/\|t\xv_a + (1-t)\xv_b\|$. Right: Attraction regions for $T(w=w_0)$ with $n=3$. The point marked in blue is $\gamma$ stable, whereas the point marked in yellow is unstable. }	
%	\label{fig:singular_point_example}    
%\end{figure}

Next, we examine the convergence of O--NCM on $\T_{\weight=\weight_0}$ with $n=3$. 
According to Proposition \ref{lem:number_real_eigenpairs}, $\weight_0=0.125$,
and the number of real eigenpairs is $N(\weight_0)=1+\binom{3}{1}=4$, three of which are not Newton-stable.
Figure \ref{fig:singular_point_attraction_b} shows the attraction regions around two of the eigevectors of $\T_{\weight_0}$, as well as the full unit sphere.
%\todo{Fig 5: we should add also the full sphere. Put 3 figs in a row, remove bars, match colors to the sphere and update text and caption}. 
%shows the eigenvectors O--NCM converged to when initialized from various points on the sphere.
%with points located on a fine grid over the unit sphere. 
%The color of each point indicates 
%the eigenvectors the NCM converged to.
%The unstable eigenvectors are marked by red circles whereas the unique stable eigenvector is marked by a black one. 
%
On the left, the eigenvector is Newton-stable. As expected, O--NCM converged to this eigenvector from any point in its neighborhood.
In contrast, the eigenvector on the right is not Newton-stable.
In this case, there is a positive probability of converging to a different eigenvector even when the initial guess is arbitrary close. 
Nonetheless, O--NCM converged to this eigenvector from {\em some} directions, even though the sufficient condition in Theorem \ref{lem:orthogonal_converge} does not hold.  

In this example, all eigenpairs of $\T_\weight$ are {\em isolated}, namely each one of them is the unique eigenpair in a small neighborhood around it.
In addition, the eigenvectors which are not Newton-stable have a projected Hessian matrix that is rank deficient but \textit{non-zero}.
In Appendix \ref{app:unstable}, we illustrate the behavior of O--NCM near eigenvectors that are either non-isolated, or have a projected Hessian matrix equals to zero. In these cases, O--NCM may not converge.

%In the appendix, we show th 
%In the appendix, we address the convergence properties of several types of Newton unstable eigenvectors.  

\begin{figure}
	\centering
	\includegraphics[width = 0.32\textwidth]{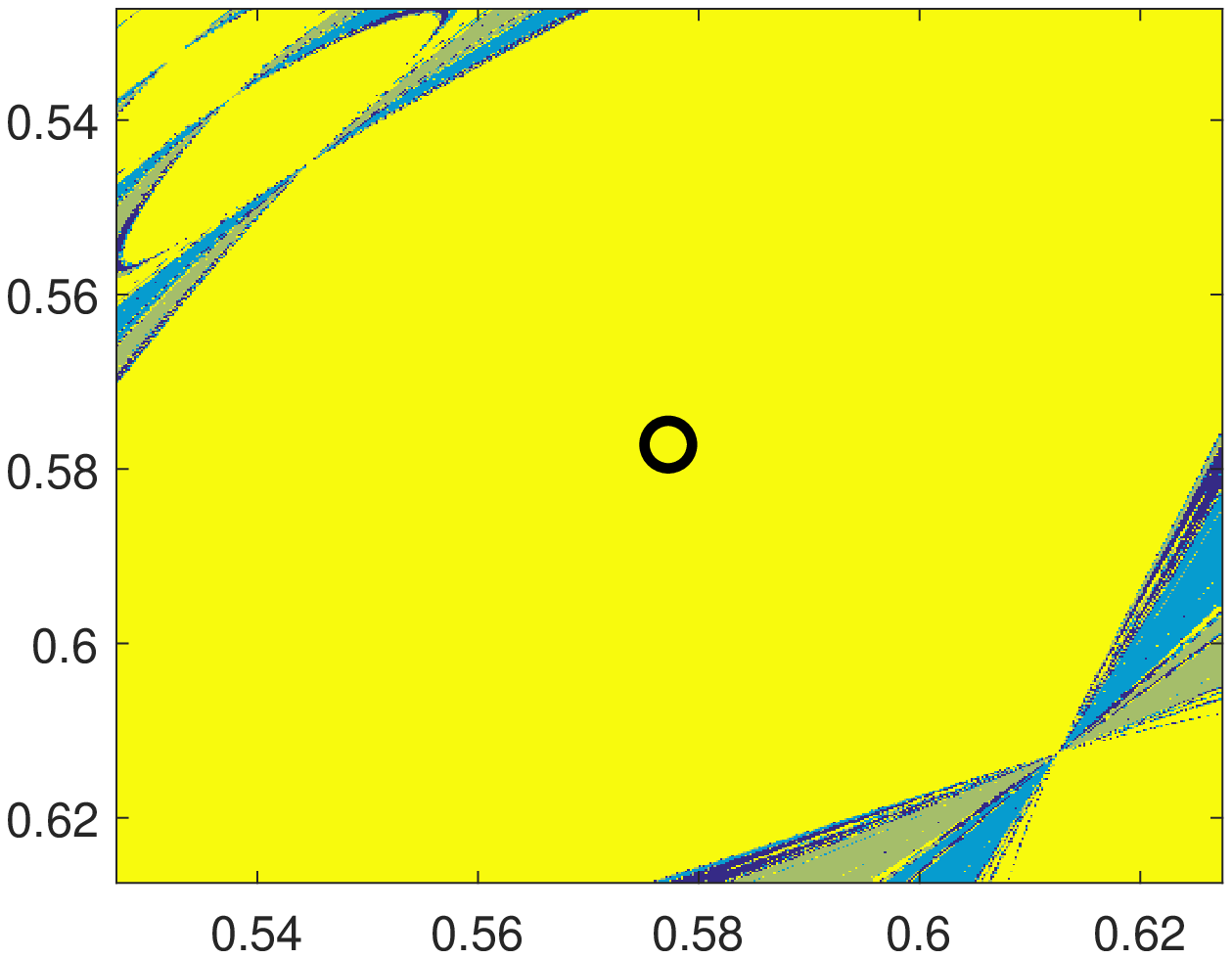}\hfill
	\includegraphics[trim={2.8cm 2cm 2.8cm 2.2cm},clip,width = 0.32\textwidth]{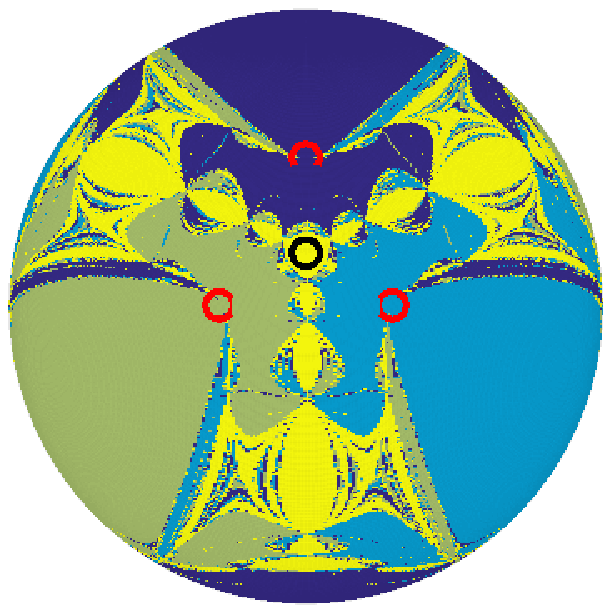}\hfill
	\includegraphics[width = 0.32\textwidth]{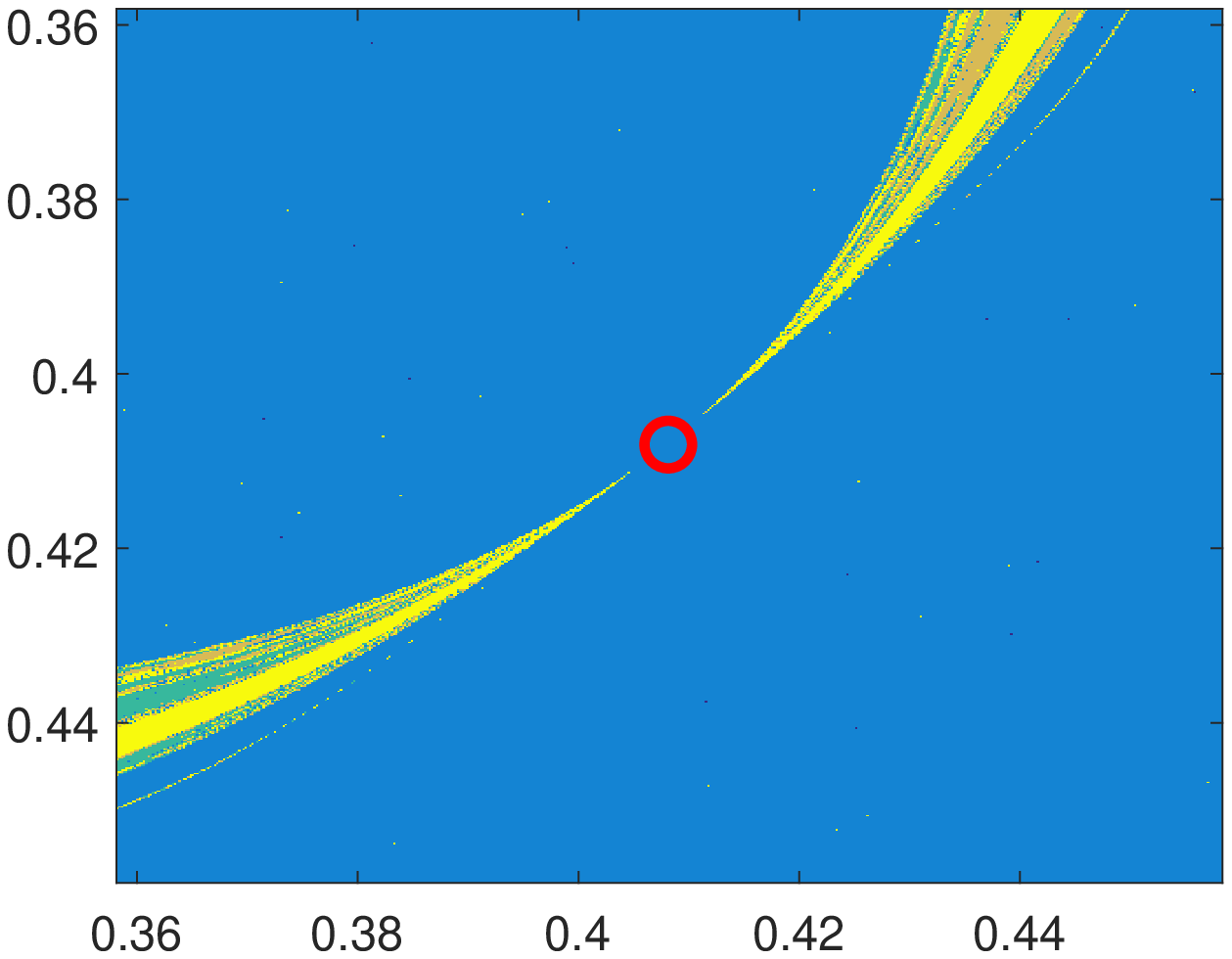}
	\caption{O--NCM's attraction region around an eigenvector who is Newton-stable (left), not Newton-stable (right), and the full unit sphere (middle) for $\T_{\weight=\weight_0}$ with dimension $n=3$. }	
	\label{fig:singular_point_attraction_b}    
\end{figure}

\section{Simulation results}\label{sec:results}
In this section we study numerically the performance of NCM and O--NCM for
computing the real eigenpairs of symmetric tensors, as compared to the homotopy method \cite{chen2016computing} and the adaptive shifted-HOPM \cite{kolda2014adaptive}.%
\footnote{Matlab code for the NCM methods can be found at  \url{https://github.com/arJaffe/NCM}, for the homotopy method at \url{http://users.math.msu.edu/users/chenlipi/TenEig.html}, and for the shifted HOPM  at  \url{http://www.sandia.gov/~tgkolda/TensorToolbox/index-2.6.html}.}
In the experiments we consider \textit{random Gaussian symmetric tensors}, whose entries are all i.i.d.\ $\mathcal N(0,1)$ up to the symmetry constraints. All experiments were done on a PC with an Intel i-3820 $3.6\text{GHz}$ processor, $16\text{GB}$ RAM and MATLAB version R2016a.

%To generate a random tensor $\T$, 
%its $(\im)^{\text{th}}$ entry is sampled independently from a normal distribution.
%Its value, however, is shared
%for any $(\im)\in[n]^m$, 
%with all entries $\{t_{\pi(\im)}\}$ corresponding to all permutations $\pi$ of the indices in $\im$, so that $\T$ is symmetric.%
% were set to a single value, sampled independently from a normal distribution.%
%\footnote{This distribution is slightly different from the one considered in \cite{breiding2017average} and does not change the results qualitatively.}

%For the iterative methods, each run was terminated if $200$ iterations were performed without reaching convergence. The residual tolerance parameter for the Newton-based  methods was set to $\delta = 10^{-5}$\todo{might be too high}.

%% to show table replace with \iftrue

\paragraph{Finding all real eigenpairs}
We examine the time needed to compute \textit{all} eigenpairs for random tensors of order $m=4$ and various dimensions $n$. Similar results are obtained for other values of $m$.
For each tensor, we first ran the homotopy method to obtain all its eigenpairs. Next, we ran NCM and O--NCM, initialized repeatedly with random points on the unit sphere, until all eigenpairs were found.
Note that without running the homotopy method first, we would have no criterion to decide whether we actually found all tensor's eigenpairs.
The process was sequential, where a new run was initialized only after the previous one ended.
This process, however, can be easily parallelized. 
We stopped the NCM iterations when $\|\xv_{(k)}-\xv_{(k-1)}\|<\delta = 10^{-10}$ or if a maximal number of $k=k_{\max}=200$ iterations was reached.
In the latter case we declared that NCM failed to converge.
 For both NCM and O--NCM, and for all tensor dimensions we considered, only $\approx 0.2\%$ of all
 random initializations failed to converge within the maximal number of iterations.

Figure \ref{fig:4th_order_all_eigenvectors} (left) shows the number of real eigenpairs as averaged over 10 independent tensors for each value of $n$.
Figure \ref{fig:4th_order_all_eigenvectors} (right) shows on a 
{\em logarithmic} scale the average time 
%required for each method 
it took to compute 
{all} real eigenpairs
%with the three 
via the homotopy, NCM and O--NCM,
%methods, 
%on a {\em logarithmic} scale, 
for the same tensors.
These results
% in Figure \ref{fig:4th_order_all_eigenvectors} 
show that both NCM and O--NCM recovered all eigenpairs faster than the homotopy method by approximately two orders of magnitude.
Moreover, O--NCM did so much faster than NCM.
%\todo{Fig 6: changed order of figures and corresponding discussion}
%
%\begin{figure}
%	\centering
%	\includegraphics[width = 0.48\textwidth]{number_of_eigenpairs.pdf}
%	\includegraphics[width = 0.48\textwidth]{time_all_eigenvectors.pdf}
%	\caption{ Left panel: The average number of real eigenpairs vs.\! the upper bound for generic tensors as a function of the dimensionality for cubic tensors. Right panel: The time to compute all eigenvectors via Homotopy, NCM and O-NCM. 
%	}
%	\label{fig:cubic_all_eigenvectors}    
%\end{figure}
\begin{figure}
	\centering
	\includegraphics[width = 0.45\textwidth]{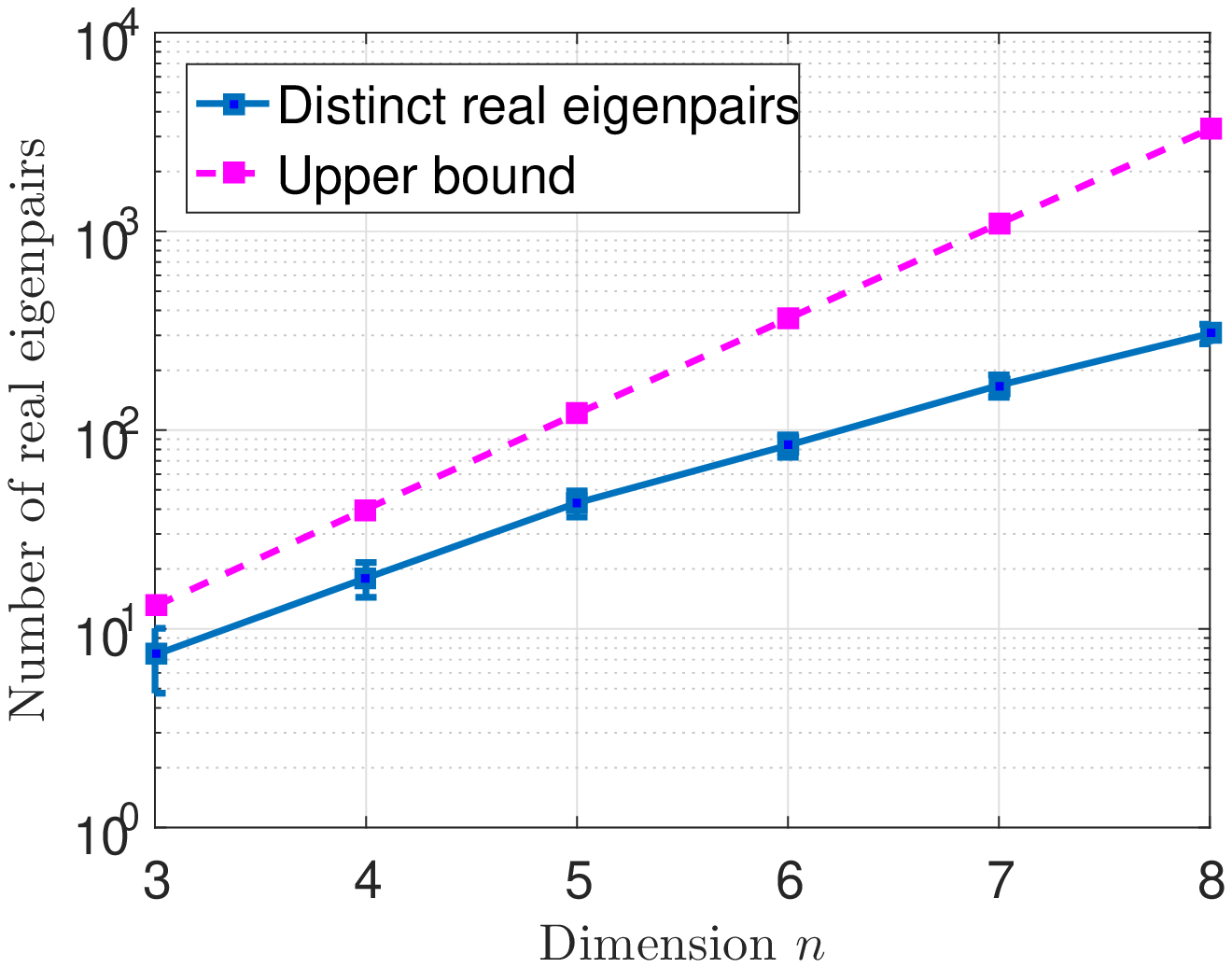}
	\hfill
	\includegraphics[width = 0.45\textwidth]{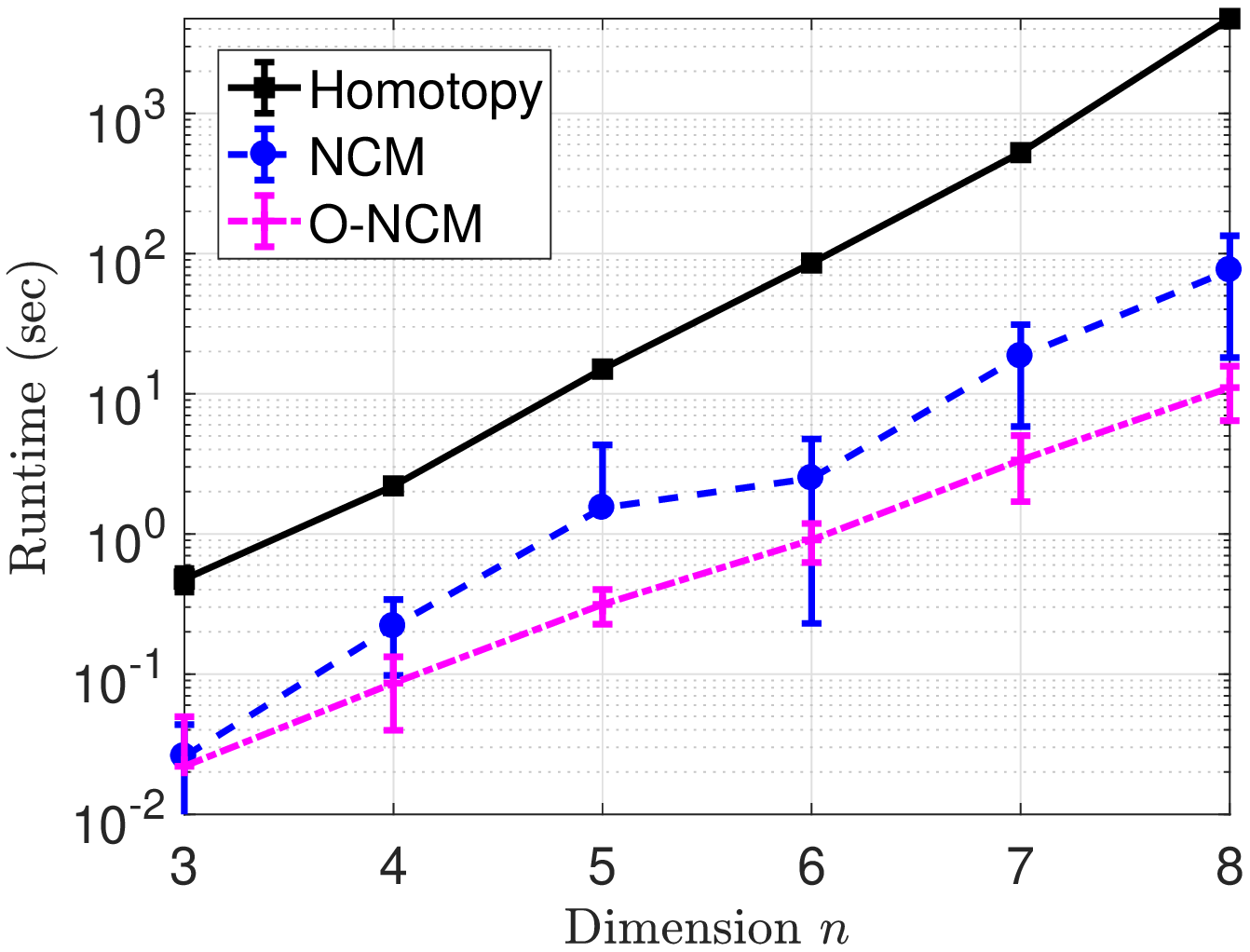}	
	\caption{ Left: The average number of real eigenpairs 
%	and the theoretical upper bound 
for random tensors of order $m=4$. Right: The average time to compute all eigenpairs via the homotopy, NCM and O--NCM. 
	}
	\label{fig:4th_order_all_eigenvectors}    
\end{figure}
\begin{figure}
	\centering
	\includegraphics[width = .45\textwidth]{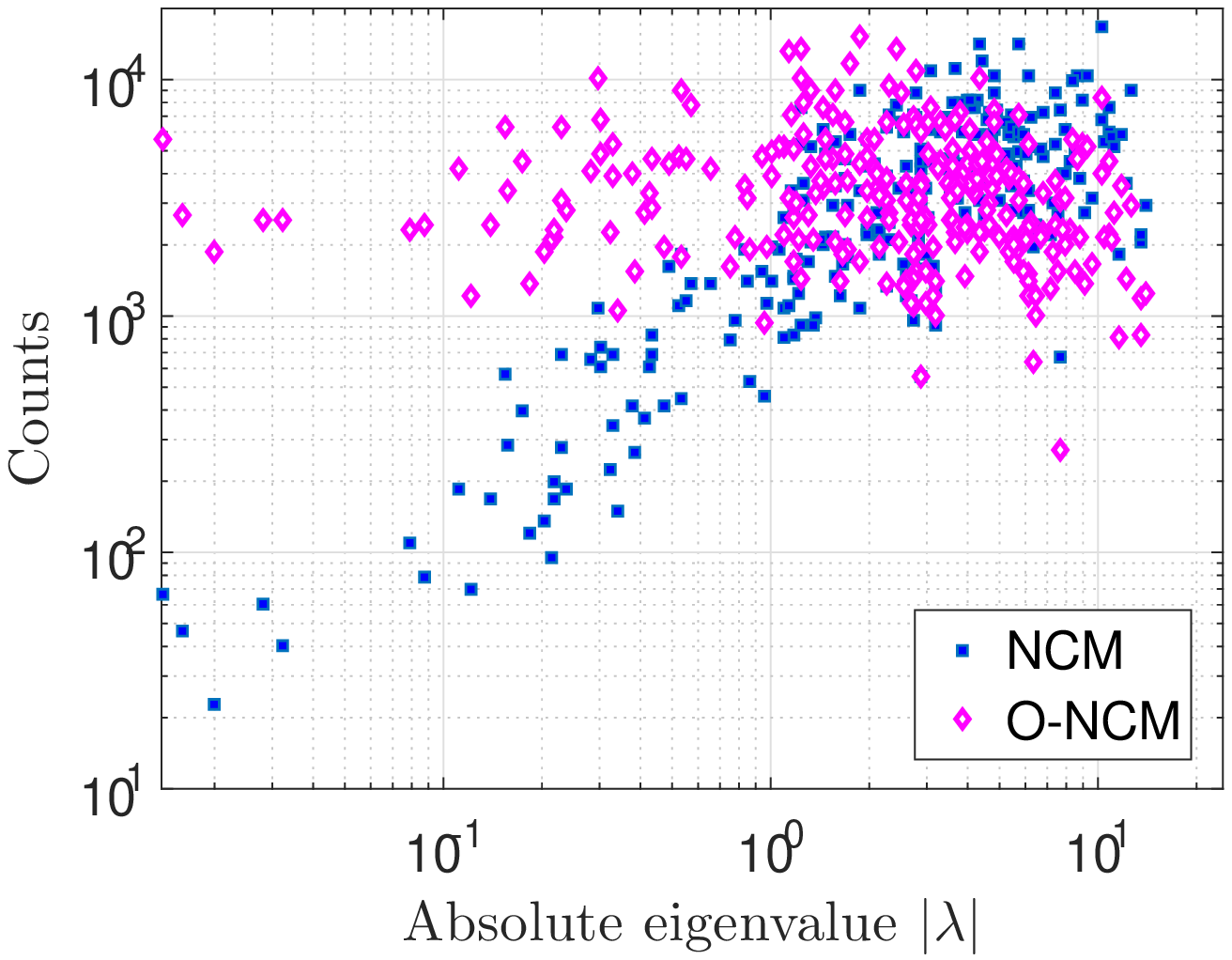}\hfill
	\includegraphics[width = .45\textwidth]{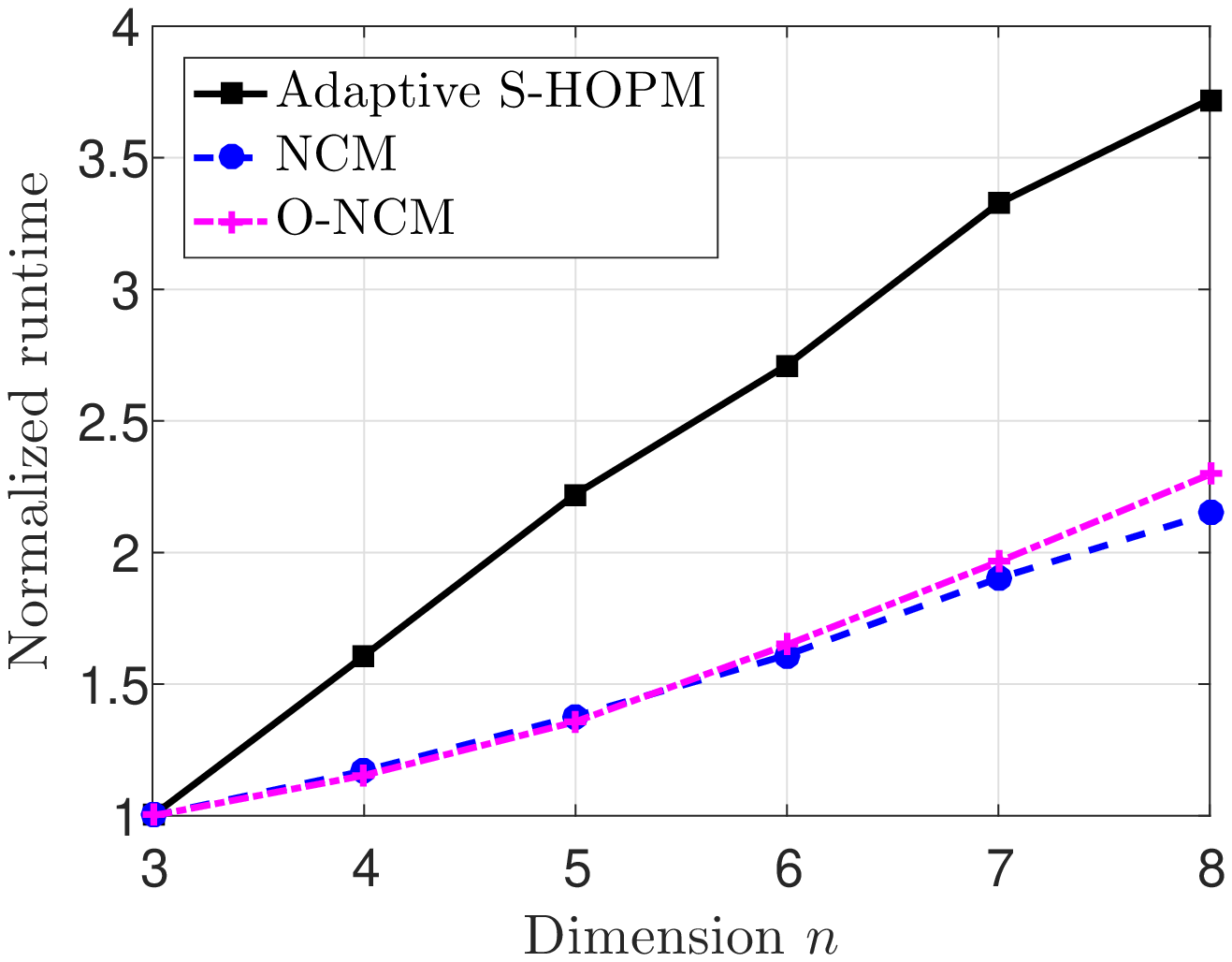}
	\caption{Left: Number of times NCM, O--NCM and adaptive S--HOPM converged to each of the eigenvalues of a random tensor with $m=4$ and $n=8$, over a total of $10^6$ random initial guesses. Right: Median runtime until convergence for NCM, O--NCM and the adaptive S--HOPM.}
\label{fig:compare_ncm_oncm}    
\end{figure}
%\todoi{Fig 7 left, no need to separate concave/convex}
\paragraph{Small eigenvalues}
To understand the gap in the runtime of NCM and O--NCM shown in Figure \ref{fig:4th_order_all_eigenvectors}, we next examine the dependence of both methods
on the eigenpair to which they converge.
% on eigenpairs having small eigenvalues.
As suggested by Theorems \ref{lem:simple_converge} and \ref{lem:orthogonal_converge}, we expect O--NCM to have larger attraction regions for small eigenvalues as compared to NCM.
%The results in Figure \ref{fig:4th_order_all_eigenvectors} indicate that O--NCM recovered all eigenpairs much faster than NCM.
%that the time for finding all eigenpairs with  is faster than in the standard NCM. 
%To better understand the gap in runtime between O--NCM and NCM in Figure \ref{fig:4th_order_all_eigenvectors},
%the difference in performance between the two methods, 
%We initialized 
%%each method 
%O--NCM and NCM with $10^6$ random starting points.
%This experiment was repeated for several of the tensors used in Figure \ref{fig:4th_order_all_eigenvectors}.
Figure \ref{fig:compare_ncm_oncm} (left) shows on a log-log scale the relative number of times the two methods converged to each eigenvalue as a function of its absolute value
for a typical random tensor of order $m=4$ and dimension $n=8$.
These counts correspond to a total of $10^6$ random initializations uniformly distributed on the unit sphere. 
%In accordance with theory, 
As one can see,
% starting from a random point , 
the probability for NCM to converge to an eigenpair decreases sharply when its eigenvalue becomes small,
% $|\lambda|$, 
while for O--NCM this probability seems to be independent of $|\lambda|$. 
% This phenomena does not happen for O-NCM where the convergence guarantees do not depend on $\lambda$. 
This difference is the source of the gap in the runtime of the two methods for finding all eigenpairs.
% as evident in Figure \ref{fig:4th_order_all_eigenvectors}. 
For completeness, the eigenvalues found by the adaptive S-HOPM are also presented.

\paragraph{Convergence rates}
Figure  \ref{fig:compare_ncm_oncm} (right) shows the median runtime till convergence of the NCM and the shifted HOPM.
%Figure \ref{fig:compare_ncm_oncm} (right) shows a comparison between the shifted-HOPM and the NCM methods in terms of the median number of iterations required until convergence. 
The stopping condition for all methods was set to $\|\xv_{(k)}-\xv_{(k+1)}\|<\delta = 10^{-10}$. The experiment was done on $100$ random tensors of fourth order with various dimensions. For each tensor, we initialized all methods with $100$ random starting points. To avoid the influence of any particular implementation, we normalized the results with the runtime of both methods for $n = 3$.
As illustrated in Fig. \ref{fig:compare_ncm_oncm},  the runtime increase of the NCM or O--NCM is significantly slower than the corresponding increase in the adaptive shifted-HOPM.
%For example, for a random 
%$8\times 8 \times 8 \times 8$ tensor, the Newton based methods required an average of $15$ iterations to reach a residual $\delta=10^{-10}$ compared to over $100$ iterations for the adaptive HOPM.
%setting the residual tolerance to $\delta=10^{-10}$,
% of order $m=4$ and dimension $n=8$, 
%$15$ iterations on average suffice for reaching convergence, while $\approx\! 100$ iterations are needed for the adaptive HOPM.
However, each NCM/O--NCM iteration may be slower, as it requires matrix inversion.

\section{Discussion and summary}\label{sec:discussion}
In this paper we developed and analyzed a Newton-based iterative approach to compute real eigenpairs of symmetric tensors. We now briefly discuss three important issues: its runtime, its ability to find all tensor eigenpairs, and its optimization point of view.

%\paragraph{Matrices vs.\ Tensors}
%First should come a discussion on the differences between NCM/O-NCM and the matrix case.\todo{add what you know}

\paragraph{Runtime}
The computational complexity of each NCM or O--NCM iteration is 
dominated by two operations: computing the Hessian matrix in $\mathcal O(n^m)$ time
%, where $m$ is the tensor order, 
and solving a system of $n$ linear equations in $\mathcal O(n^3)$ time.
The latter step may be significantly sped up by applying various preconditioning techniques, as done in other iterative methods that solve systems of linear equations \cite{dembo1982inexact}.
%Adapting inexact Newton methods can significantly speed up the latter with mild degradation of performance .
For sparse tensors, the computation of the Hessian can be accelerated as well, see \cite{Smith2015splat}.

\paragraph{Optimization point of view} 
Following a constructive comment by one of the referees, we note that NCM can be seen as an adaptation of the Gauss--Newton method \cite{bjorck1996numerical}.
Recall that $\grad(\xv) = \T(I,\xv,\dots,\xv) - \mu(\xv) \xv=\bm 0$ if and only if $\xast\in S_{n-1}$ is an eigenvector of $\T$, with a corresponding eigenvalue $\lambda^* = \mu(\xast)$. Hence, our goal is to find the \emph{global} minima of the realizable nonlinear least-squares problem
\beqn
\min_{\xv\in S_{n-1}} \tfrac{1}{2}\nrm{\grad(\xv)}^2.
\label{eq:obj_fun}
\eeqn
Given the current estimate $\xk\in S_{n-1}$, Gauss--Newton first linearizes $\grad(\xv)$ at $\xk$,
\beq
\grad(\xv)\approx \grad(\xk) + A(\xk)(\xv-\xk) = \grad(\xk) + A(\xk)\yv,
\eeq
where $[A(\xv)]_{ij} = \partial g_{i}(\xv)/\partial x_j$ is the $n\times n$ Jacobian matrix of $\grad(\xv)$, given in \eqref{eq:A_def}.
Then, instead of \eqref{eq:obj_fun}, the following approximate linear least-squares problem is solved,
\beq
\yk = \argmin_{\yv\in \R^n} \tfrac{1}{2}\nrm{\grad(\xk) + A(\xk)\yv}^2,
%\label{eq:obj_fun_lin}
\eeq
which is exactly the NCM correction in \eqref{eq:system_lin_simple}.

Besides NCM, other nonlinear optimization methods can be used to solve \eqref{eq:obj_fun}, such as the Levenberg-Marquardt algorithm
\cite{marquardt1963algorithm} and 
other trust-region and line search algorithms.
%its various extensions, also known as trust-region algorithms. 
These methods, among other things, introduce an additional regularization term to better control the direction in which the method proceeds at each iteration,
similarly to the role played by the (adaptive) shifted-HOPM as compared to HOPM.
Specifically, instead of \eqref{eq:system_lin_simple}, one solves the following linear system with an appropriate regularization matrix $B_k\in\R^{n\times n}$, 
\beq
( A(\xk)^T A(\xk) + B_k)\yv = - A(\xk)^T \grad(\xk).
\eeq
While NCM currently has no global convergence guarantees, an appropriate (adaptive) choice of $B_k$ can lead to global convergence guarantees, including to eigenvectors having a zero Hessian. Further studying the role of regularization for the tensor eigen-problem is an interesting direction for future research.

\begin{figure}%
\centering
\includegraphics[width=.48\columnwidth]{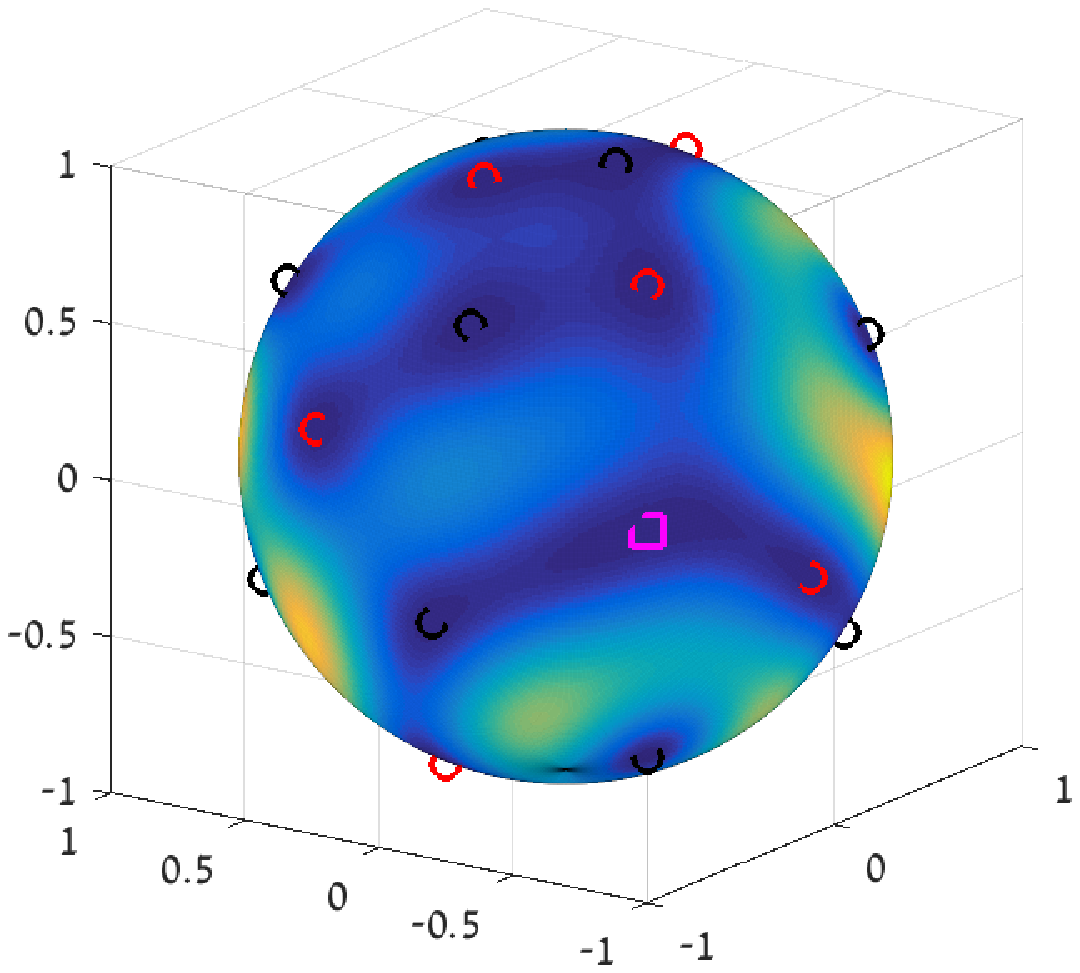}%
%\hfill
\includegraphics[width=.48\columnwidth]{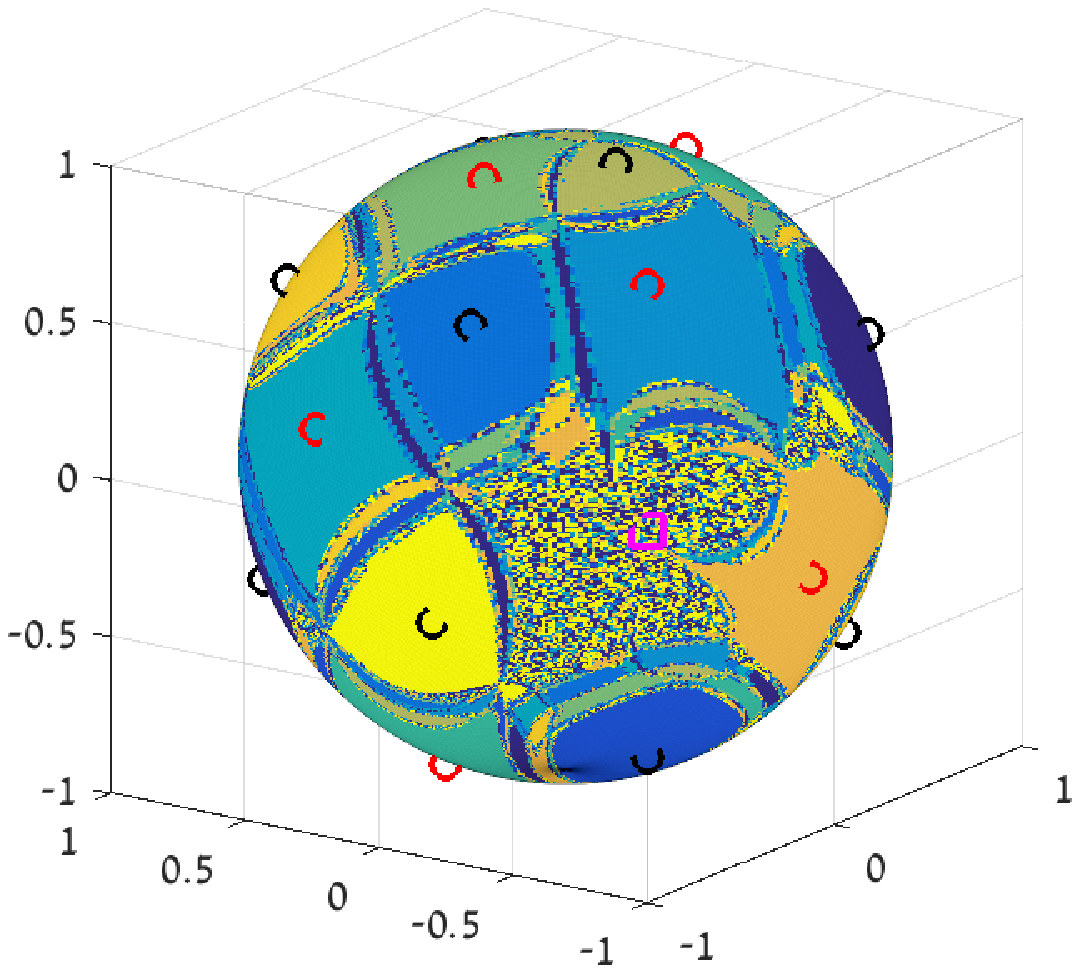}%
\caption{Left: Plot of $f(\xv)$ for \cite[Example 1]{kofidis2002best}. The global minima (circles) correspond to eigenpairs. A local minimum is depicted by a square. Right: Attraction regions for NCM.}%
\label{fig:kofidis}%
\end{figure}

However, in addition to the global minima, $f(\xv)\equiv\frac{1}{2}\nrm{\grad(\xv)}^2$ may have local minima which should be avoided.
Interestingly, NCM elegantly avoids such local minima as the following example illustrates.
In Figure \ref{fig:kofidis} (left), we plot $f(\xv)$ as a function of $\xv\in S_2$ for the $3\times 3\times 3\times 3$ symmetric tensor of Example 1 in \cite{kofidis2002best}.
Its eigenvectors are depicted by circles while a local minimum $\xlocal$ of $f$ is depicted by a square symbol. 
In Figure \ref{fig:kofidis} (right) we show the attraction regions for NCM starting from various locations on $S_{n-1}$.
As one can see, NCM does not converge to $\xlocal$ and in fact is highly unstable around this point;
close initial points in this neighborhood may converge to arbitrarily far eigenvectors.
% (in addition, limit cycles of the iterations can occur).

%To see why this is so, note that since $\xlocal$ is a local minimum, for an initial point $\xz$ near $\xlocal$, NCM may get closer and closer to $\xlocal$ at the first few iterations. However, when $\xk$ gets very close to $\xlocal$, the facts that $\grad(\xlocal)$ is bounded away from $\bm 0$ 
%%$\geq \alpha > 0$ for some $\alpha>0$  
%and $\nabla f(\xlocal) = A(\xlocal)^T \grad(\xlocal) = \bm 0$ result in an \emph{overshoot}.
%Indeed, since $\grad(\xlocal)$ is in the null space of $A(\xlocal)^T$, $A(\xk)$ is close to singular, and $\nrm{\yk} = \nrm{A(\xk)^{-1} \grad(\xk)}$ is large, taking $\xkpo$ far away from $\xk$ and $\xlocal$.
%In this case $\xkpo$ is highly sensitive to $\xz$.

To see why this is so, note that since $\xlocal$ is a local minimum, for an initial point $\xz$ near $\xlocal$, NCM may get closer and closer to $\xlocal$ at the first few iterations. However, the facts that $\grad(\xlocal)$ is bounded away from $\bm 0$ 
%$\geq \alpha > 0$ for some $\alpha>0$  
and $\nabla f(\xlocal) = A(\xlocal)^T \grad(\xlocal) = \bm 0$ implies that $\grad(\xlocal)$ is in the null space of $A(\xlocal)^T$.
As $\xk$ gets closer to $\xlocal$, $A(\xk)$ becomes close to singular. The result is an \emph{overshoot}, a sharp increase in $\nrm{\yk} = \nrm{A(\xk)^{-1} \grad(\xk)}$, taking $\xkpo$ far away from $\xk$ and $\xlocal$.  % is large.
% result in an \emph{overshoot}.
 %$A(\xlocal)^T$, $A(\xk)$ is close to singular, and $\nrm{\yk} = \nrm{A(\xk)^{-1} \grad(\xk)}$ is large, 
%taking $\xkpo$ far away from $\xk$ and $\xlocal$.

\paragraph{Finding all eigenpairs of generic tensors}\label{sec:generic}
According to our theoretical analysis, NCM and O--NCM converge to eigenpairs whose Hessian matrix is full rank. An interesting question is whether these methods can thus converge to \emph{all} real eigenpairs of a \emph{generic} symmetric tensor
\cite{cartwright2013number}.
%We conjecture that this is indeed the case.
%
%To see why, first note that while any symmetric tensor has at most $((n-1)^m-1)/(m-2)$ eigenvalues \cite[Theorem 5.6]{cartwright2013number}, the number of eigenvectors is not finite in general. 
%
%As an extreme example, it was shown in \cite{qi2005eigenvalues, kolda2011shifted} that when $m$ is even,
%there exists an identity tensor, $\mathcal{E}$, satisfying 
%\beq
%\mathcal{E}(I, \xv,\dots,\xv) =\xv,\qquad \forall\xv \in S_n.
%\eeq
%This tensor has one real eigenvalue $\lamast=1$ and all $S_n$ as eigenvectors.
%However, 
%It was shown in \cite{cartwright2013number} that the number of eigenvalues and eigenvectors of a generic symmetric tensor is finite and matches the upper bound $((m-1)^n-1)/(m-2)$.
%%
%Similarly, one can show that when all eigenpairs of $\T$ are Newton-stable, then only finitely many eigenvectors exists.
%In a nut shell, this follows from the fact that Newton-stable eigenpairs are ``isolated'' on the unit shpere.
%We make the following conjecture.
Interpreting generic in the sense of algebraic geometry,
%Seeing $\mu(\xv)$ as a homogeneous polynomial, 
an adaptation of \cite[Theorem 1.2]	{cartwright2013number} to the symmetric tensor case implies the following (proof omitted).
%uses algebraic geometry arguments is the following (see, e.g., \cite[Theorem 1.2]{cartwright2013number}).

%\begin{conjecture*} 
\begin{proposition}
\label{prop:generic}
All real eigenpairs of a generic symmetric tensor are Newton-stable.
%\end{conjecture*}
\end{proposition}

%Conjecture \ref{conj:generic} 
%If this conjecture holds, then 
%Proposition \ref{prop:generic}, together with 
Hence, Theorems \ref{lem:simple_converge} and \ref{lem:orthogonal_converge} imply that NCM and O--NCM  are guaranteed to find {all} eigenpairs of a generic symmetric tensor given a sufficiently large number of random initializations.
%This does not contradict the NP-hardness of the problem, since some tensors may have eigenvalues whose attraction region is exponentially small in $n$, thus requiring an exponential number of initializations to find them.
%\end{corollary}
%Indeed, for O--NCM to find all eigenpairs, many independent restarts are needed. 
%These, however, can be performed in a completely parallelized manner.
%Unfortunately, as opposed to the homotopy method, unless the number of real eigenvalues
% is a-priori known or, alternatively, matches the theoretical upper bound, one cannot
% determine whether all real eigenpairs have been found.

%\input{generic_tensors}

\paragraph{Acknowledgments} We thank Lek--Heng Lim, Meirav Galun and Haim Avron for interesting discussions.

%\newpage
\appendix
\section{\normalfont Convergence of NCM}
%{[\normalfont Proof of Theorem \ref{lem:simple_converge}]}
\label{sec:NCM_proofs}
%\todo{title looks funny}
To prove Theorem \ref{lem:simple_converge} we shall make use of the following auxiliary lemma. 
\begin{lemma} \label{lem:NCM_aux}
Consider one update step of Algorithm \ref{algo:newton_correction},
as in Equation (\ref{eq:normalized_update}), 
starting from an initial $\xv\in S_{n-1}$ and ending with $\xv'=(\xv + \yv)/\nrm{\xv + \yv}\in S_{n-1}$. Let $\yast=\xast-\xv$. If \(\nrm{\yv-\yast}\le 1/2\), then
\beqn
\label{eq:x_tag_xast}
\nrm{\xast-\xv'} 
\leq \frac{2\nrm{\yast - \yv}}{1 - {\nrm{\yast - \yv}}}.
\eeqn
\end{lemma}
\begin{proof}
By definition, 
\begin{eqnarray}
\label{eq:bound2}
\nrm{\xast-\xv'} 
& = & \left\|\xast-\frac{\xv + \yv}{\nrm{\xv + \yv}}\right\|
= \left\|\xast-\frac{\xast -\yast + \yv}{\nrm{\xast -\yast + \yv}}\right\| \nonumber \\
& = & \left\|{\xast\left(1-\frac{1}{\nrm{\xast-\yast+\yv}}\right) 
        + \frac{\yast-\yv}{\nrm{\xast-\yast+\yv}}} \right\|.
\end{eqnarray}
Since $\nrm{\xast} = 1$, by the triangle inequality,
\beq
1 - \nrm{\yast - \yv} \leq \nrm{\xast -\yast + \yv} \leq 1 + \nrm{\yast - \yv}.
\eeq
Applying the triangle inequality to (\ref{eq:bound2}), combined with the assumption  $\nrm{\yast-\yv}\le 1/2$,
%Assuming $ \|\yast - \yz\|<1$ which will be proven later,
\beq
\nrm{\xast-\xv'}
&\leq& \left| 1 -\frac{1}{\nrm{\xast -\yast + \yv}}\right| + \frac{\nrm{\yast - \yv}}{\nrm{\xast -\yast + \yv}}
%\eeq
%Hence,
%%  and assuming $\nrm{\yast - \yz} < 1$ which will be proven later,
%\beq
%\|\xast-\xo\| 
\\
&\leq& \left( \frac{1}{1 - \nrm{\yast - \yv}}-1\right) + \frac{\nrm{\yast - \yv}}{1 - \nrm{\yast - \yv}}
=
\frac{2\nrm{\yast - \yv}}{1 - {\nrm{\yast - \yv}}},
\eeq
hence concluding the proof.
\end{proof}

\begin{proof}[Proof of Theorem \ref{lem:simple_converge}]
To prove quadratic convergence it suffices to show that there exists an $\eps >0$ and a constant \(C>0\) such that from any initial point $\xz$ that satisfies $\nrm{\xz-\xast} < \varepsilon$,
\beq
\err_k = \frac{\nrm{\xast-\xkpo}}{\nrm{\xast-\xk}^2}<C, \quad \forall k\ge 0.
\eeq

We start by analyzing $\err_k$ at the first iteration $k=0$. 
%is uniformly bounded over all iterations $k\geq 0$ of Algorithm \ref{algo:newton_correction}.
Let $\yz$ be the approximate correction of $\yast=\xast-\xz$, given by the solution of \eqref{eq:system_lin_simple}. The new approximation of $\xast$, given by Eq. (\ref{eq:normalized_update}), is $\xo=(\xz + \yz)/\nrm{\xz + \yz}$.
Assume for the moment that the initial guess $\xz$ is sufficiently close to $\xast$ so \ that $\nrm{\yast-\yz}<1/2$. Then, by Lemma \ref{lem:NCM_aux},
%assuming 
%$ \|\yast - \yz\|<1$ 
%and $\|\xast-\xo\|$ is bounded in terms of $\|\yast - \yz\|$
\beqn
\label{eq:xast_xo}
\nrm{\xast-\xo} 
\leq \frac{2\nrm{\yast - \yz}}{1 - {\nrm{\yast - \yz}}}.
\eeqn
%which will also be proven later,
%we first bound $\|\xast-\xo\|$ in terms of $\|\yast - \yz\|$.
%
%Let $k\geq 0$ be some iteration.
%Let $\yastk = \xast - \xk$ be the current exact correction, $\yk$ be the solution to \eqref{eq:system_lin_simple}, and $\xkpo$ as in (\ref{eq:normalized_update}).
%%, as calculated by Algorithm \ref{algo:newton_correction}.
%By definition, the denominator of $\err_k$ is $\|\xast-\xk\|^2  = \|\yastk\|^2$.
%As for its numerator, we first bound $\|\xast-\xkpo\|$ in terms of $\|\yastk - \yk\|$,
%\beq
%\label{eq:bound2}
%\|\xast-\xkpo\| 
%= \left\|\xast-\frac{\xk + \yk}{\|\xk + \yk\|}\right\|
%= \left\|\xast-\frac{\xast -\yastk + \yk}{\|\xast -\yastk + \yk\|}\right\|.
%\eeq
%By the triangle inequality,
%\beq
%\|\xast-\xkpo\| 
%&\leq& \left( 1 -\frac{1}{\|\xast -\yastk + \yk\|}\right) + \frac{\|\yastk - \yk\|}{\|\xast -\yastk + \yk\|}.
%\eeq
%Since
%\beq
%1 - \|\yastk - \yk\| \leq \|\xast -\yastk + \yk\| \leq 1 + \|\yastk - \yk\|,
%\eeq
%we have,
%\beq
%\|\xast-\xkpo\| &\leq& \left( 1 -\frac{1}{1 + \|\yastk - \yk\|}\right) + \frac{\|\yastk - \yk\|}{1 - \|\yastk - \yk\|}
%=
%\frac{2\|\yastk - \yk\|}{1 - {\|\yastk - \yk\|^2}}.
%\eeq
%Expanding in $\|\yast-\yv\|$,
%\beq
%\|\xast-\tilde\xv\|  \leq  2\|\yast - \yv\| + O\left(\| \yast - \yv\|^3 \right) .
%\eeq
%Below, we show that $\|\yast - \yv\| = O(\|\yast\|^2)$.
%Thus, 
%\beqn
%\label{eq:err_th_simple}
%\err_k = \frac{\|\xast-\tilde \xv\|}{\|\xast-\xv\|^2}
%\leq \frac{2\|\yast - \yv\|}{\|\yast\|^2} + O(\|\yast\|^3).
%\eeqn
%
Hence, it suffices to bound $\nrm{\yast - \yz}$.
To this end, we view the exact system of non-linear equations (\ref{eq:system_quad_simple}), whose solution is $\yast$, as a perturbation of the approximate system of linear equations (\ref{eq:system_lin_simple}), whose solution is $\yz$.
Consider the matrix $A$ of Eq.\!\! \eqref{eq:A_def} evaluated at the eigenvector $\xast$,
\beq
A(\xast) = H(\xast)- m \lamast \xast (\xast)^T.
%= 2\T(I,I,\xast) - \mu(\xast)I - 3 \xast \T(I,\xast,\xast)^T .
\eeq
Note that $A(\xast)$ is symmetric with eigenvalues $(\mu_1^\ast,\ldots,\mu_{n-1}^\ast, -2\lamast)$.
Since $\xast$ is $\g$-Newton-stable, $|{\mu_{i}^\ast}| \geq \g$ for all $i\in[n-1]$.
In addition, since $\lamast \neq 0$ by assumption, $A(\xast)$ is full rank with smallest singular value 
\beqn
\label{eq:ncm_smallest_ev}
\sigma^*=\sigma_{\min}(A(\xast)) \geq \min\{\g, 2|\lamast|\}>0.
\eeqn
By the continuity of $\sigma_{\min}(A(\xv))$ in $\xv$, there exists a $\eps_1 > 0$ such that $\sigma_{\min}(A(\xv)) \geq \sigma^*/2$ for all $\xv$ with $\nrm{\xast- \xv} \leq \eps_1$. In particular, if $\nrm{\xast-\xz}<\eps_1$, then $A(\xz)$ is invertible and the solution to \eqref{eq:system_quad_simple} satisfies the following implicit equation in $\yast$, 
\beq
 \yast &=& - A(\xz)^{-1} (\grad(\xz) - \Delta(\xz,\yast)).
\eeq
Similarly, the unique solution to the correction equation \eqref{eq:system_lin_simple} is as in \eqref{eq:system_lin_simple_solution},
\beq
\yz = -A(\xz)^{-1} \grad(\xz).
\eeq
Subtracting the last two equations gives 
\beqn
\label{eq:yast_yz}
\nrm{\yast - \yz} \leq \nrm{A(\xz)^{-1}}\cdot \nrm{\Delta(\xz,\yast)}
\leq \frac{2}{\sigma^*} \nrm{\Delta(\xz,\yast) }.
\eeqn

To bound $\nrm{\Delta(\xz,\yast)}$,
first note 
%(see, e.g., \cite{zhang2012best}) 
that for any symmetric tensor $\T$ there exists an $M = M(\T) < \infty$ such that for any $\xv\in S_{n-1}$, $\yv\in\R^n$ and $j \leq m-1$,
\beqn
\label{eq:tensor_spectral}
%\label{eq:spectral_norm}
\big\|\T(I,\underbrace{\xv,\ldots,\xv}_{m-j-1},\underbrace{\yv,\dots,\yv}_{j \text{ times}})\big\|^{} \leq M \nrm{\yv}^j.
\eeqn
Similar bounds hold for $\T(\xv,\ldots,\xv,\yv,\dots,\yv)\xv$ and 
$\T(\xv,\ldots,\xv,\yv,\dots,\yv)\yv$ according to their powers in $\yv$.
%(see \eqref{eq:simple_high_order_terms}),
Bounding each term of $\Delta(\xz,\yast)$ in \eqref{eq:simple_high_order_terms} separately by \eqref{eq:tensor_spectral},
%Let $\lambda_{\max}$ be the largest eigenvalue of $\T$.
%It has been recently shown in \cite[Theorem 2.1]{zhang2012best} that for any symmetric tensor $\T$,
%\begin{equation}
%\sup_{\bm u_1, \ldots,\bm u_m\in S_n} \T(\bm u_1, \ldots,\bm u_m) = \sup_{\bm u\in S_n} \T(\bm u, \ldots,\bm u) = \lambda_{\max}.
%\label{eq:tensor_spectral}
%\end{equation}
%Recalling the definition of $\Delta(\xv,\yast)$ in \eqref{eq:simple_high_order_terms},
there are less than $3m^2$ terms involving $M\nrm{\yast}^2$ and
%$\binom{m}{j}+\binom{m}{j-1}+\binom{m-1}{j} \leq 3\binom{m}{j}$ terms in $\Delta(\xz,\yast)$  involving  $M\nrm{\yast}^j$ and thus 
at most $3\cdot 2^m$ terms  involving $M\nrm{\yast}^j$ with $j\in\{3,\dots,m\}$.
Assuming $ \nrm{\yast}=\nrm{\xast-\xz} <m^{2}/2^m \le 1$, implies
\beq
\nrm{\Delta(\xz,\yast)} \leq 3 m^2 M \nrm{\yast}^2 +  3\cdot 2^m M\nrm{\yast}^3 \leq   6 m^2 M \nrm{\yast}^2.
%= m^2 M \eps^2 + O(\eps^3)
 %\leq  ( m^2+C)  M  \eps^2.
\eeq
Inserting this bound into (\ref{eq:yast_yz}), 
\beqn
\label{eq:yast_yz_2}
\nrm{\yast - \yz} \leq \frac{12 m^2 M}{\sigma^*}\nrm{\yast}^2. 
%\leq \frac{m^2 C M}{\g} \nrm{\yast}^2,
\eeqn
%where $C= C(\g/\lamast)$ is a suitable constant.
Note that if $\nrm{\xast-\xz}=\nrm{\yast}\le \eps_2=(\sigma^*/24m^2M)^{1/2}$, then   $\|\yast - \yz\|\leq 1/2$ as required by Lemma \ref{lem:NCM_aux}.
Under this condition, by Eq. (\ref{eq:xast_xo}), it follows that
\beqn
\label{eq:err_th_simple}
\err_0 = \frac{\nrm{\xast-\xo}}{\nrm{\xast-\xz}^2}
\leq \frac{1}{\nrm{\yast}^2}
\frac{2\nrm{\yast - \yz}}{1 - {\nrm{\yast - \yz}}}
\leq \frac{48m^2 M }{\sigma^*} 
.
\eeqn
As an interim summary, if $\nrm{\xast-\xz}\le \min\{\eps_1,\eps_2,m^2/2^m\}=\eps_0$, then Eq.\ (\ref{eq:err_th_simple}) holds. We conclude the proof for a general iteration $k\geq 1$ by induction.
For the first induction step to work, it required that if $\nrm{\xast-\xz}\leq \eps <\eps_0$, then $\nrm{\xast-\xo}<\eps$ as well.
By \eqref{eq:err_th_simple}, this is satisfied for $\eps=\min\{\eps_0,\sigma^*/48m^2M\}$ and the proof for a general $k$ holds similarly. The quadratic convergence of Algorithm \ref{algo:newton_correction} follows.
%Taking $\yast$ sufficiently small concludes the proof.
\end{proof}

\section{\normalfont Proof of Lemma \ref{lem:stewart}}
\label{sec:ONCM_proofs}
%\begin{proof}[Proof of Lemma \ref{lem:stewart}]
We show that a vector $\ortc$ satisfies \eqref{eq:system_lin} if and only if it satisfies
        \begin{equation}\label{eq:alt_lin_system}
        P_{\xv}^\perp \tA(\xv)P_{\xv}^\perp \ortc = - P_{\xv}^\perp \grad(\xv)
        \quad \text{and} \quad \xv^T \ortc = 0.
        \end{equation}
Lemma \ref{lem:stewart} then follows by recalling that $P_{\xv}^\perp = U_{\xv} U_{\xv}^T$ and multiplying the first equation in        \eqref{eq:alt_lin_system} by $U_{\xv}^T$ from the left.
%if $\ortc$ satisfies \eqref{eq:system_lin} then it satisfies \eqref{eq:alt_lin_system}. 

To prove the first direction, note that by the last row of \eqref{eq:system_lin}, the solution $\ortc$ to \eqref{eq:system_lin} is perpendicular to $\xv$, so $\xv^T \ortc = 0$ and $P_{\xv}^\perp \ortc = \ortc$.
Multiplying the first ``row'' of \eqref{eq:system_lin} by $P_{\xv}^\perp$ from the left and noting that $P_{\xv}^\perp \xv = \bm 0$, we find that the left hand side is given by
\begin{equation}
P_{\xv}^\perp( \tA(\xv) \ortc  - \beta\xv)  = P_{\xv}^\perp \tA(\xv)  \ortc = P_{\xv}^\perp \tA(\xv) P_{\xv}^\perp \ortc.
\end{equation}
In addition, one can easily check that $\grad(\xv)$ is perpendicular to $\xv$, so the right hand side of the equality in \eqref{eq:alt_lin_system} is $-P_{\xv}^\perp \grad(\xv) = -\grad(\xv)$
and \eqref{eq:alt_lin_system} follows.

To prove the other direction, 
suppose $\ortc$ satisfies \eqref{eq:alt_lin_system}.
So $\ortc^T \xv = 0$ and $P_{\xv}^\perp \ortc = \ortc$.
Define $\beta = \xv^T \tA(\xv)\ortc $ and write the left hand side of \eqref{eq:alt_lin_system} as
\beq
P_{\xv}^\perp \tA(\xv) P_{\xv}^\perp \ortc = (I-\xv \xv^T) \tA(\xv) P_{\xv}^\perp \ortc = \tA(\xv) \ortc - \xv \xv^T \tA(\xv) \ortc =  \tA(\xv) \ortc - \beta \xv.
\eeq
Since $-P_{\xv}^\perp \grad(\xv) = -\grad(\xv)$, it follows that  $(\ortc,\beta)$ satisfies \eqref{eq:system_lin} as required.
%\end{proof}

\section{\normalfont Convergence of O--NCM}
%{\normalfont Proof of Theorem \ref{lem:orthogonal_converge}}
\label{sec:ONCM_proofs_conv}
%\begin{proof}[Proof of Theorem \ref{lem:orthogonal_converge}]
The proof of Theorem \ref{lem:orthogonal_converge} is similar to that of Theorem \ref{lem:simple_converge},
and makes use of the following auxiliary lemma.
\begin{lemma}
\label{lem:ONCM_aux}
Consider one update step of Algorithm \ref{algo:newton_orthogonal},
as in Equation (\ref{eq:ortho_update}), 
starting from an initial $\xv\in S_{n-1}$ and ending with $\xv'=(\xv + \ortc)/\nrm{\xv + \ortc}\in S_{n-1}$.
%Let $\yast=\xast-\xv$.
Let $\alpha=\xv^T \xast$ and $\ortc^* = \alpha \xast - \xv$.
If $\alpha \geq 1/2$ and \(\nrm{\ortc^*-\ortc} \leq 1/4\), then
\beqn
\label{eq:oncm-x_tag_xast}
\nrm{\xast-\xv'} 
\leq \frac{2\nrm{\ortc^* - \ortc}}{\alpha - {\nrm{\ortc^* - \ortc}}}
\leq 8\nrm{\ortc^* - \ortc}.
\eeqn
\end{lemma}

\begin{proof}
By definition,
\beqn
\nonumber
\|\xast-\xv'\| 
&=& \left\|\xast-\frac{\xv + \ortc}{\|\xv + \ortc\|}\right\|
= \left\|\xast-\frac{\alpha\xast -\ortc ^* + \ortc}{\|\alpha\xast -\ortc ^* + \ortc\|}\right\|
\\
\label{eq:aux_1}
&=& 
\left\|\xast\left(1-\frac{\alpha}{\|\alpha\xast -\ortc ^* + \ortc\|}\right)
+ \frac{\ortc ^* - \ortc}{\|\alpha\xast -\ortc ^* + \ortc\|}
\right\|
.
\eeqn
Since $\nrm{\xast} = 1$ and $\alpha>0$, by the triangle inequality,
\beq
\alpha - \|\ortc ^* - \ortc\| \leq \|\alpha\xast -\ortc ^* + \ortc\| \leq \alpha + \|\ortc ^* - \ortc\|.
\eeq
Applying the triangle inequality to \eqref{eq:aux_1}, combined with the assumption \(\nrm{\ortc^*-\ortc} \leq \alpha/2\),
\beq
\|\xast-\xv'\| 
&\leq& \left| 1 -\frac{\alpha}{\|\alpha\xast -\ortc ^* + \ortc\|}\right| + \frac{\|\ortc^* - \ortc\|}{\|\alpha\xast -\ortc ^* + \ortc\|}.
\\
&\leq& \left( \frac{\alpha}{\alpha - \|\ortc ^* - \ortc\|} - 1\right) + \frac{\|\ortc^* - \ortc\|}{\alpha - \|\ortc ^* - \ortc\|}
%\\
%&=& \left(\frac{1}{\alpha + \|\ortc ^* - \ortc\|} + \frac{1}{\alpha - \|\ortc ^* - \ortc\|}\right) \|\ortc ^* - \ortc\|
=
\frac{2\|\ortc ^* - \ortc\|}{\alpha - {\|\ortc ^* - \ortc\|}},
\eeq
hence concluding the proof. 
\end{proof}

\begin{proof}[Proof of Theorem \ref{lem:orthogonal_converge}]
We show that 
there exists an $\varepsilon  >0$ and a constant $C>0$, such that for any initial point $\xz$ that satisfies $\| \xz-\xast\|<\varepsilon$,
\begin{equation}
\err_k = \frac{\|\xast-\xkpo\|}{\|\xast-\xk\|^2} < C
,\quad \forall k\geq 0.
\end{equation}
%is uniformly bounded over all iterations $k\geq 0$ of Algorithm \ref{algo:newton_orthogonal}.

\newcommand{\ortcz}{\ortc_{(0)}}
\newcommand{\zz}{\bm z_{(0)}}
We start by analyzing $\err_k$ at the first iteration $k=0$. 
Let $\ortcz = U_{\xz} \zz$ be the approximate correction of $\ortc^* = \alpha \xast - \xz$, given by the solution of \eqref{eq:Cz_d}.
%Expanding
%\(
%\alpha = \sqrt{1-\|\ortc^*\|^2} = 1 - \frac{1}{2}\|\ortc^*\|^2 + O(\|\ortc^*\|^4),
%\)
%the denominator of $\err_k$ satisfies
%\beqn
%\label{eq:noumd_thast_thv_orth}
%\|\xast-\xv\|^2 = \|\ortc^*\|^2 + O(\|\ortc^*\|^4).
%\eeqn
The new approximation of $\xast$
%, given by Eq. (\ref{eq:normalized_update}), 
is $\xo = (\xz +\ortcz)/\nrm{\xz+\ortcz}$.
Since $\ortc^*$ is orthogonal to $\xast$,
the denominator of $\err_0$ satisfies
\beqn
\label{eq:lb_1}
\|\xast - \xz\|^2 = \|\xast - (\alpha \xast - \ortc^*)\|^2
= (1-\alpha)^2 + \|\ortc^*\|^2
%= {2(1-\sqrt{1-\|\ortc^*\|^2})} 
\geq \|\ortc^*\|^2.
\eeqn

To bound the numerator of $\err_0$,
assume for the moment that $\xz$ is sufficiently close to $\xast$ so that $\alpha=\xz^T \xast\geq 1/2$ and  \(\nrm{\ortc^*-\ortcz} \leq \alpha/2\). Then, by Lemma \ref{lem:ONCM_aux},
%assuming 
%$ \|\yast - \yz\|<1$ 
%and $\|\xast-\xo\|$ is bounded in terms of $\|\yast - \yz\|$
\beqn
\label{eq:oncm_xast_xo}
\nrm{\xast-\xo} 
\leq \frac{2\nrm{\ortc^* - \ortcz}}{\alpha - {\nrm{\ortc^* - \ortcz}}}
\leq 8\nrm{\ortc^* - \ortcz}
.
\eeqn
Hence, it suffices to bound $\nrm{\ortc^* - \ortcz}$.
Define $\bm z^* = U_{\xz}^T \ortc^*$ and note that since $\xast$ and $\ortc^*$ are orthogonal, $\xz^T \ortc^* = (\alpha\xast - \ortc^*)^T \ortc^* = -\|\ortc^*\|^2$.
Writing $I = U_{\xz} U_{\xz}^T  + \xz \xz^T$, we thus have
\beqn
\label{eq:bxast_bzast}
\ortc^* = (U_{\xz} U_{\xz}^T  + \xz \xz^T) \ortc^* = U_{\xz} \bm z^* - \|\ortc^*\|^2 \xz.
\eeqn
Since $\nrm{\xz}=1$, 
\beqn
\label{eq:bx_bxast}
\|\ortc^*-\ortcz\| = \Big\|U_{\xz}\bz^* - \|\ortc^*\|^2 \xz - U_{\xz}\zz\Big\| 
\leq \|\bz^* - \zz\| + \|\ortc^*\|^2.
\eeqn
To bound $\|\bz^*-\zz\|$, we view the exact system of non-linear equations (\ref{eq:system_quad}), whose solution is $\ortc^*$, as a perturbation of the approximate system of linear equations \eqref{eq:Cz_d}, whose solution is $\ortcz = U_{\xz} \zz$.
By \eqref{eq:system_quad}, $\ortc^*$ solves the non-linear equation
\beqn
\label{eq:bxast_system_lin}
\tA(\xz) \ortc^* = -\grad(\xz) + \beta^* \xz 
+ \tD(\xz,\ortc^*, \beta^*).
%- \tbinom{m-1}{2}\T(I,\xv,\ldots,\xv,\ortc^*,\ortc^*)+O(\|\ortc^*\|^3).
\eeqn
%Recalling that $\ortc = U_{\xv}\bz$, 
%To bound $\|\bz -\bz^*\|$, 
We multiply \eqref{eq:bxast_system_lin} by $U_{\xz}^T$ from the left and plugin \eqref{eq:bxast_bzast} to obtain the set of non-linear equations in $\bm z^*$ (and $\beta^*,\ortc^*$),
\beqn
\label{eq:Hpxz}
H_p(\xz)\bz^* =-U_{\xz}^T \Big(\grad(\xz) - \tD(\xz,\ortc^*, \beta^*)\Big).
\eeqn
%Here, $\tilde\Delta(\xv,\ortc^*)$ accounts for all high order terms,
%\beq
%\tilde\Delta(\xv,\ortc^*) &=& \alpha \beta^* U_{\xv}^T\xast - \tbinom{m-1}{2}\T(U_{\xv},\xv,\ldots,\xv,\ortc^*,\ortc^*) + O(\|\ortc^*\|^3)%+ \|\ortc^*\|^2 U_{\xv}^T \tA(\xv)\xv.
%\eeq
Since $\xast$ is $\g$-Newton-stable, 
the projected Hessian $H_p(\xast)$ is full rank with smallest singular value \[\sigma_{\min}(H_p(\xast)) = \g>0.\]
%\beqn
%\label{eq:oncm_smallest_ev}
%\sigma^*=\sigma_{\min}(A(\xast)) \geq \min\{\g, 2|\lamast|\}>0.
%\eeqn
By the continuity of $\sigma_{\min}(H_p(\xv))$ in $\xv$, there exists an $\eps_1 > 0$ such that $\sigma_{\min}(H_p(\xz)) \geq \g/2$ for all $\xv$ with $\nrm{\xast- \xv} \leq \eps_1$. In particular, if $\nrm{\xast-\xz}<\eps_1$, then $H_p(\xz)$ is invertible and the solution to \eqref{eq:Hpxz} satisfies the following implicit equation in $\bz^*$, 
\beq
\bz^* = - H_p(\xz)^{-1} U_{\xz}^T \Big(\grad(\xz) - \tD(\xz,\ortc^*, \beta^*)\Big).
\eeq
Similarly, the unique solution to \eqref{eq:Cz_d} is  
\beq
\zz = - H_p(\xz)^{-1} U_{\xz}^T \grad(\xz).
\eeq
Subtracting the last two equations gives 
\beqn
\label{eq:zast_zz}
\nrm{\bz^* - \zz} \leq \nrm{H_p(\xz)^{-1}}\cdot \nrm{\tD(\xz,\ortc^*, \beta^*)}
\leq \frac{2}{\g} \nrm{\tD(\xz,\ortc^*, \beta^*)}.
\eeqn
We bound the norm of $\tD(\xz,\ortc^*, \beta^*)$ in \eqref{eq:Delta_ONCM} by 
\beqn
\label{eq:Delta_norm_oncm}
\nrm{\tD(\xv, \ortc^*, \beta^*)} \leq  |\beta^*|\cdot \nrm{\ortc^*}  + \sum_{i=2}^{m-1} \tbinom{m-1}{i}\Big\|\T(I,\underbrace{\xv,\ldots,\xv}_{m-i-1},\underbrace{\ortc^*,\ldots,\ortc^*}_{i})\Big\|.
\eeqn
To bound the terms in the sum, 
note 
%(see, e.g., \cite{zhang2012best}) 
that there exists an $M = M(\T) < \infty$ such that for any $\xv\in S_{n-1}$, $\bm u\in\R^n$ and $j \leq m-1$,
\beqn
\label{eq:oncm_tensor_spectral}
%\label{eq:spectral_norm}
\big\|\T(I,\underbrace{\xv,\ldots,\xv}_{m-j-1},\underbrace{\bm u,\dots,\bm u}_{j \text{ times}})\big\|^{} \leq M \nrm{\bm u}^j.
\eeqn
Bounding each term in the sum in \eqref{eq:Delta_norm_oncm} by \eqref{eq:oncm_tensor_spectral}, there are at most $m^2$ terms involving $M\nrm{\ortc^*}^2$, and at most $2^m$ terms involving $M\nrm{\ortc^*}^i$ with $i\in\{3,\dots,m-1\}$.
Assuming $\nrm{\xast-\xz}\leq m^2 /2^{m} \leq 1$ and recalling that by \eqref{eq:lb_1}, $\nrm{\ortc^*} \leq \nrm{\xast-\xz}$,
\beq
\sum_{i=2}^{m-1} \tbinom{m-1}{i}\Big\|\T(I,\underbrace{\xv,\ldots,\xv}_{m-i-1},\underbrace{\ortc^*,\ldots,\ortc^*}_{i})\Big\| \leq m^2 M \nrm{\ortc^*}^2 +  2^m M\nrm{\ortc^*}^3 \leq   2 m^2 M \nrm{\ortc^*}^2.
\eeq
For the first term in \eqref{eq:Delta_norm_oncm}, recalling the definition of $\beta^*$ in \eqref{eq:beta_ast},
\beqn
\nonumber
|\beta^*| &=& |\mu(\xv)-\alpha^{m-2}\lambda^\ast| = |\T(\alpha \xast-\ortc^*,\ldots,\alpha \xast-\ortc^*)-\alpha^{m-2}\lambda^\ast| 
\\
\label{eq:lambda_diff}
&\leq& | \alpha^{m-2}\lambda^\ast(\alpha^2-1)|
+ \sum_{j=1}^m \alpha^{m-j} \tbinom{m}{j}  \Big|\T(\underbrace{\xast,\ldots,\xast}_{m-j},\underbrace{\ortc^*,\dots,\ortc^*}_{j \text{ times}})\Big|.
\eeqn
For the first term in \eqref{eq:lambda_diff}, note that $\alpha^2-1 = \nrm{\ortc^*}^2$, $|\alpha| \leq 1$ and $|\lamast| \leq M$,
hence 
\beq
| \alpha^{m-2}\lambda^\ast(\alpha^2-1)| \leq M \nrm{\ortc^*}^2.
\eeq
Since $\xast$ is an eigenvector and $(\ortc^*)^T \xast = 0$, all terms in the sum in \eqref{eq:lambda_diff} with $j=1$ vanish,
\beq
\T(\xast,\ldots,\xast,\ortc^*) = \lambda^\ast (\ortc^*)^T \xast=0. 
\eeq
Bounding each term in the sum in \eqref{eq:lambda_diff} with $j\geq 2$ by \eqref{eq:oncm_tensor_spectral}, there are at most $m^2$ terms involving $M\nrm{\ortc^*}^2$, and at most $2^m$ terms involving $M\nrm{\ortc^*}^j$ with $j\in\{3,\dots,m\}$.
Since $\nrm{\ortc^*}\leq m^2/2^m$, the first term in \eqref{eq:Delta_norm_oncm} is thus bounded by
\beq
|\beta^*|\cdot \nrm{\ortc^*} \leq M \nrm{\ortc^*}^3 +  2^m M \nrm{\ortc^*}^3 \leq 2 M m^2 \nrm{\ortc^*}^2.
\eeq
It follows that
\beq
\nrm{\tD(\xz, \ortc^*, \beta^*)} \leq 4 M m^2 \nrm{\ortc^*}^2.
\eeq
Inserting this bound into \eqref{eq:zast_zz},
\beq
\nrm{\bz^* - \zz} \leq \frac{8 M m^2}{\g} \nrm{\ortc^*}^2.
\eeq
Inserting this into \eqref{eq:bx_bxast},
\beqn
\label{eq:ub_2}
\|\ortc^*-\ortcz\| \leq \|\bz^* - \zz\| + \|\ortc^*\|^2 \leq \Big(\frac{8 M m^2}{\g} +1\Big) \nrm{\ortc^*}^2.
\eeqn
Note that if $\nrm{\xast-\xz}\le \eps_2=\min\{1,(4(\frac{8 M m^2}{\g} +1))^{-1/2}\}$, then $\alpha \geq 1/2$.
By \eqref{eq:lb_1}, $\|\ortc^*\|\le \eps_2$ as well.
Thus, \eqref{eq:ub_2} implies $\|\ortc^*-\ortcz\|\leq 1/4$ as required by Lemma \ref{lem:ONCM_aux}.
Under this condition, \eqref{eq:oncm_xast_xo} implies
\beq
\nrm{\xast-\xo} \leq 8\nrm{\ortc^* - \ortcz} \leq 
8\Big(\frac{8 M m^2}{\g}+1\Big) \nrm{\ortc^*}^2
.
\eeq
%Bounding each term of $\Delta(\xz,\yast)$ in \eqref{eq:simple_high_order_terms} separately by \eqref{eq:tensor_spectral}, there are less than $3m^2$ terms involving $M\nrm{\yast}^2$ and at most $3\cdot 2^m$ terms  involving $M\nrm{\yast}^j$ with $j\in\{3,\dots,m\}$.
%Assumption $ \nrm{\yast}=\nrm{\xast-\xz} <m^{2}/2^m \le 1$, implies
%\beq
%\nrm{\Delta(\xz,\yast)} \leq 3 m^2 M \nrm{\yast}^2 +  3\cdot 2^m M\nrm{\yast}^3 \leq   6 m^2 M \nrm{\yast}^2.
%\eeq
%Inserting this bound into (\ref{eq:yast_yz}), 
%\beqn
%\label{eq:uast_uz_2}
%\nrm{\yast - \yz} \leq \frac{12 m^2 M}{\sigma^*}\nrm{\yast}^2. 
%%\leq \frac{m^2 C M}{\g} \nrm{\yast}^2,
%\eeqn
%%where $C= C(\g/\lamast)$ is a suitable constant.
%Note that if $\nrm{\xast-\xz}=\nrm{\yast}\le \eps_2=(\sigma^*/24m^2M)^{1/2}$, then   $\|\yast - \yz\|\leq 1/2$ as required by Lemma \ref{lem:NCM_aux}.
%
Combining the last two bounds we obtain
\beqn
\label{eq:err_th_ort}
\err_0 = \frac{\nrm{\xast-\xo}}{\nrm{\xast-\xz}^2}
\leq 
8\Big(\frac{8 M m^2}{\g}+1\Big).
\eeqn
As an interim summary, if $\nrm{\xast-\xz}\le \min\{\eps_1,\eps_2,m^2/2^m\}=\eps_0$, then \eqref{eq:err_th_ort} holds. 
The rest of the proof follows by induction as in the proof of Theorem \ref{lem:simple_converge}.
%We conclude the proof for a general iteration $k\geq 1$ by induction.
%For the first induction step to work, it is required that if $\nrm{\xast-\xz}\leq \eps <\eps_0$, then $\nrm{\xast-\xo}<\eps$ as well. This is satisfied for $\eps=\min\{\eps_0,\frac{\alpha}{4}(\frac{8 M m^2}{\g}+1)^{-1}\}$ and the proof for a general $k$ holds similarly. The quadratic convergence of Algorithm \ref{algo:newton_orthogonal} follows.
\end{proof}

%\documentclass[review]{siamart1116}

% Information that is shared between the article and the supplement
% (title and author information, macros, packages, etc.) goes into
% ex_shared.tex. If there is no supplement, this file can be included
% directly.

%\input{ex_shared}

%\begin{document}

\section{\normalfont Proof  of Proposition \ref{lem:number_real_eigenpairs}}
\label{sec:example_proof}
First, we prove an auxiliary lemma concerning the structure of the eigenvectors of $\T_\weight$. 
Recall $l=\floor{n/2}$. For any subset $\A \subseteq \{1,\ldots, l\}$ define the following two $n$-dimensional vectors,
\[
\Av= \sum_{i \in \A} \bm e_i \quad\text{and}\quad \Avc= \sum_{i \notin \A}\bm e_i.
 \quad %\text{for some} \quad \A \subseteq \{1,\ldots,l\}.
\]
%\]

\begin{lemma}\label{lem:Tbeta_ev}
%Let $|\A|$ denote the size of $\A$. 
There is a function 	
	$\alpha(\weight,|\A|):\R\times\mathbb N\to\R$ such that all eigenvectors of $\T_\weight$ are of the form
\[%\begin{equation}\label{eq:eigenvalue_form}
\xast(\weight,\A) \propto  \alpha(\weight,|\A|)\Av  + \Avc
,\quad %\text{for some} \quad \A \subseteq \{1,\ldots,l\}.
\]%\end{equation}
	\end{lemma}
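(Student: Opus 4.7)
The plan is to exploit the symmetry structure of $\T_\weight$ to pin down the coordinatewise form of its eigenvectors. I expect $\T_\weight$ to be invariant under permutations that preserve the pairing of indices in the first half $\{1,\ldots,l\}$ with their partners in $\{l+1,\ldots,n\}$, and in particular under the involution swapping $i\leftrightarrow i+l$. Any such invariance implies that if $x$ is an eigenvector with eigenvalue $\lambda$, then so is the image of $x$ under any symmetry of $\T_\weight$. This is the structural engine that forces the restrictive two-value form.

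The first step is to write the eigen-equation $\T_\weight x = \lambda\, x$ (or its appropriate tensor-eigen analogue) component by component. For coordinates $i,j$ that play the same structural role in $\T_\weight$, I would subtract the two equations and show that the resulting identity forces $x_i=x_j$. This reduces the number of distinct coordinate values to at most two: one value taken on some subset $S\subseteq\{1,\ldots,n\}$, and another taken on its complement. After rescaling, I can normalize the value on the complement to $1$ and call the other value $\alpha$.

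The second step is to rule out the case where $S$ includes indices from the second half $\{l+1,\ldots,n\}$. Here I would invoke the involutive symmetry $i\leftrightarrow i+l$: applying it to an eigenvector indexed by a set $S$ that crosses the midpoint produces a second, equivalent eigenvector indexed by $(S\cap\{1,\ldots,l\})\cup\{l+1,\ldots,n\}\setminus(S-l)$. This lets me assume without loss of generality that $S=\A\subseteq\{1,\ldots,l\}$. Substituting the ansatz $\xast\propto\alpha\Av+\Avc$ into the eigen-equation then leaves a scalar equation whose coefficients depend only on $\weight$ and $|\A|$, and solving it defines the function $\alpha(\weight,|\A|)$.

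The main obstacle will be the first step: rigorously showing that the coordinates of an arbitrary eigenvector collapse to only two distinct values. The symmetry argument gives this for eigenvectors within an irreducible component of the symmetry group's action, but ruling out more complicated mixtures requires analyzing how the eigen-equation at coordinate $i$ couples $x_i$ to the sums $\sum_{j\in\A}x_j$ and $\sum_{j\notin\A}x_j$, and verifying that any would-be third value produces a contradiction with one of these coupled equations.
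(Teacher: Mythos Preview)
Your proposal rests on a misreading of $\T_\weight$. From the mode-product computation in the paper, $\T_\weight=\sum_{i=1}^n \bm e_i^{\otimes 3}+\weight\,\bm 1^{\otimes 3}$, which is invariant under the full symmetric group $S_n$; there is no distinguished pairing $i\leftrightarrow i+l$. Consequently your first step cannot work as stated: all coordinates play the same structural role, so the argument ``subtracting equations for same-role $i,j$ forces $x_i=x_j$'' would, if valid, force \emph{every} coordinate equal and leave only the eigenvector $\bm 1/\sqrt n$. In fact subtraction does not give $x_i=x_j$ outright; it gives the dichotomy $x_i=x_j$ or $x_i+x_j=\lamast$, and it is this second alternative that produces the nontrivial eigenvectors.

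The paper's argument is purely algebraic and short. Writing $\xast=\sum_i\alpha_i\bm e_i$, the eigenequation reads $\alpha_i^2+\weight\bar\alpha^2=\lamast\alpha_i$ for every $i$, where $\bar\alpha=\sum_i\alpha_i$. Subtracting the $j$th equation from the $i$th and factoring yields $(\alpha_i-\alpha_j)(\alpha_i+\alpha_j-\lamast)=0$, so every $\alpha_j$ equals either $\alpha_i$ or $\lamast-\alpha_i$: at most two distinct coordinate values. The restriction $\A\subseteq\{1,\ldots,l\}$ is then only a labeling convention---swap the two values if necessary so that the smaller group has size at most $n/2$, and use $S_n$-invariance to relabel it as an initial segment. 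Your symmetry heuristic never reaches this factorization, and the involution you invoke in the second step addresses a structure that is simply not present in $\T_\weight$.
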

\begin{proof}
Let $\xast= \sum_{i=1}^n \alpha_i \bm e_i$ be an eigenvector of $\T_\weight$ with eigenvalue $\lamast$. %. Let $\alpha_1,\ldots, \alpha_n$ be a set of scalars such that $\xast= \sum_{i=1}^n \alpha_i \bm e_i$. 
To prove the lemma it suffices to show that the coefficients $\alpha_1,\ldots,\alpha_n$ can attain at most two distinct values.
Applying mode product to $\T_\weight$ with $\xast$, 
%\todo{Maybe change to $\T_w$?}
%Since $(\xast,\lamast)$ is an eigenpair it satisfies, 
%\begin{equation} 
\begin{equation}\label{eq:T_w_mult}
\T_\weight(I,\xast,\xast) = \sum_{i=1}^n \alpha_i^2 \bm e_i + \weight\Big(\sum_{i=1}^n \alpha_i\Big)^2 \bm 1 
= \sum_{i=1}^n (\alpha_i^2+ \weight\bar \alpha^2)\bm e_i,
 %= \lamast \sum_{i=1}^n \alpha_i \bm e_i  
\end{equation}
where $\bar \alpha = \sum_{i=1}^n \alpha_i$.
Since $(\xast,\lamast)$ is an eigenpair it satisfies, 
\begin{equation}\label{eq:T_w_eigenpair}
	\sum_{i=1}^n (\alpha_i^2+ \weight\bar \alpha^2)\bm e_i = \lamast \sum_{i=1}^n \alpha_i \bm e_i.
\end{equation}
%Since $\|\xast\|=1$, by multiplying both sides of \eqref{eq:T_w_eigenpair} with $\bm 1^T$ we get a quadratic equation for $\bar \alpha$,
%\[
%1 +nw\bar{\alpha}^2 = \lamast \bar{\alpha}
%\]
%whose solution is,
%\[
%\bar \alpha = \frac{\lamast \pm \sqrt{{\lamast}^2-4nw} }{2nw} 
%\]
Multiplying both sides of \eqref{eq:T_w_eigenpair} with $\bm e_i^T$ gives,
\begin{equation}\label{eq:alpha_i_quadratic}
(\alpha_i^2+ \weight\bar \alpha^2) = \lamast \alpha_i, \qquad \forall i\in \{1,\ldots,n\}.
\end{equation}
Subtracting Equations (\ref{eq:alpha_i_quadratic}) with $j\neq i$,
\[\alpha_i^2-\alpha_j^2=(\alpha_i+\alpha_j)(\alpha_i-\alpha_j) = \lamast (\alpha_i-\alpha_j).\]
We thus conclude that for any $j \neq i$ either $\alpha_j=\alpha_i$ or $\alpha_j=\lamast-\alpha_i$.
%Assume $\alpha_i \neq \alpha_j$,  repeating the above procedure with $(\alpha_k,\alpha_j)$, we get that either $\alpha_k=\alpha_j$ or $\alpha_k=\lamast-\alpha_j = \alpha_i$.
It follows that the set $\{\alpha_1,\ldots,\alpha_n\}$ contains up to $2$ distinct values satisfying \eqref{eq:alpha_i_quadratic}. 
%This establish
%Note that the vector $\xast = \frac{1}{\sqrt{n}} \bm 1$ is an eigenvector of $\T_\weight$ for all values of $\weight$. We can thus be certain that Eq. \eqref{eq:alpha_i_quadratic} has at least one solution.
\end{proof}
%Let $(a,b)$ be two possibly distinct values
%\todo{how do you know there are 2? there can be 1 or 0 real solutions}
% satisfying \eqref{eq:alpha_i_quadratic}, and $\A \subset \{1,\ldots,n\}$. We can express $\xast$ as
%\begin{equation}
%\xast = a \sum_{i \in \A} \bm e_i + b \sum_{i \notin \A} \bm e_i = (a-b)\sum_{i \in \A}\bm e_i + b \bm 1 = b \Bigg( \frac{a-b}{b}\sum_{i \in \A}\bm e_i + \bm 1 \Bigg)
%\end{equation}
%We conclude the proof of part (i) by replacing $\frac{a-b}{b}$ with a constant $c$.

The first part of Proposition \ref{lem:number_real_eigenpairs} determines the number of real eigenvectors for $\T_\weight$. %et us compute the total number of complex  eigenvectors. 
Following lemma \ref{lem:Tbeta_ev}, let $\xast = \Avc + \alpha \Av$ be proportional to some eigenvector of $\T_\weight$. %If $\A = [n]$, the vector corresponds to $\frac{1}{\sqrt{n}} \bm 1$. Assume $\A \neq [n]$. Multiplying $\T(w)$ with $\xv$ 
%on the second and third modes, 
%\begin{equation}\label{eq:tensor_analyed}
%\T(I,\xv,\xv) = \sum_{i \in \A} c(c+2)\bm e_i+(1+w(c|\A|+n)^2)\bm 1,
%\end{equation}
By Eq. \eqref{eq:T_w_mult},
\[%\]\begin{equation}
\T_\weight(I,\xast,\xast) = (\alpha^2+\weight\bar \alpha^2) \Av+(1+\weight\bar \alpha^2) \Avc. 
\]%\end{equation}
%where $|\A|$ denotes the size of $\A$.
Since $\xast=\alpha \Av +\Avc$ is proportional to an eigenvector of $\T_\weight$, %the ratio between the coefficient of $\Av$ and $\Avc$ in $\xast$ is equal to the corresponding ratio in $\T_\weight(I,\xast,\xast)$,
\[%\begin{equation}
\alpha = \frac{\alpha^2+\weight\bar \alpha^2}{1+\weight\bar \alpha^2}.
\]%\end{equation}
or equivalently,
\begin{equation}\label{eq:alpha_ratio}
\alpha(1-\alpha)=\weight(1-\alpha)\bar \alpha^2.
\end{equation}
%\todo{Seems you solve the above all over again}
One solution to Eq. \eqref{eq:alpha_ratio} is $\alpha=1$, which corresponds to the eigenvector $\xv=\frac{1}{\sqrt{n}}\bm 1$. For $\alpha \neq 1$ we replace $\bar \alpha$ with,
\[%\begin{equation}\label{eq:bar_alpha}
\bar \alpha = \sum_{i=1}^n \alpha_i = \alpha |\A|+ (n-|\A|).
\]%\end{equation}
%where $|\A|$ denotes the size of $\A$. Inserting Eq. \eqref{eq:bar_alpha} to Eq. \eqref{eq:alpha_ratio}  
The result is the following quadratic equation, 
\begin{equation}\label{eq:quad}
\weight|\A|^2\alpha^2 + (2\weight|\A|(n-|\A|)-1)\alpha+\weight(n-|\A|)^2 =0.
\end{equation}
%With up to $2$ real solutions for $c$. 
The solutions to Eq.\ \eqref{eq:quad} determine, up to a normalizing factor, the eigenvectors (both real and complex) of $\T_\weight$.
%Recall $l = \floor{n/2}$. 
Due to the problem's symmetry %the vectors obtained by $\xv = \alpha\sum_{i \in \A}  \bm e_i + \sum_{i \notin \A}\bm e_i$ are equivalent to those obtained by $\sum_{i \notin \A} c \bm e_i +\bm 1 $. 
we may charaterize \textit{all} real eigenvectors by computing the  solutions to \eqref{eq:quad} only for subsets $\A$ with $0 \leq |\A|\leq l$.  %The number of distinct subsets with size $|\A|$ is equal to $\binom{n}{|\A|}$. Assuming $2$ solutions for every value of $|\A|$ and 
%for odd $n$,
%\todo{why?}
% the total number of 
%solutions is,
%\begin{equation}
%\sum_{i=0}^l 2 \binom{n}{i} = 
%\sum_{i=0}^l \binom{n}{i}+\binom{n}{n-i} =
% \sum_{i=0}^{n-1} \binom{n}{i} = 
%2^n-1
%\end{equation}  
%Note that this is the upper bound on the number of equivalent classes for generic cubic tensor eigenvectors, see \cite{cartwright2013number}.
%The computation is similar for even values of $n$, where due to symmetry,  only half of the number of subsets of size $l$ are to be taken into account.
%Now let us compute the number of real eigenvectors $N(w)$.
Consider the discriminant $\mathcal D(\weight,|\A|)$ of the quadratic equation \eqref{eq:quad},
\[
\mathcal D(\weight,|\A|) = (2\weight|\A|(n-|\A|)-1)^2-4\weight^2|\A|^2(n-|\A|)^2 = 1 - 4\weight|\A|(n-|\A|).
\]
For a given $\A$, the number of real solutions to \eqref{eq:quad} is
\[\begin{cases}
2 & \weight< \frac{1}{4|\A|(n-|\A|)} \\
1 & \weight= \frac{1}{4|\A|(n-|\A|)} \\
0 & \weight> \frac{1}{4|\A|(n-|\A|)}.
\end{cases}\]
%For $\Delta=0$ we get one real solution,
%\begin{equation}
%w = \frac{1}{4|\A|(n-|\A|)} \qquad c = \frac{n-2|\A|}{|\A|}.
%\end{equation}
%If $\Delta<0$, the solutions to \eqref{eq:quad} are complex.
Hence, the number of real eigenpairs decreases at specific thresholds.
The smallest threshold corresponds to $|\A|=l$ and is given by $\weight_0 = \frac{1}{4l(n-l)}$. When $\weight <\weight_0$,  there are $2$ real solutions to Eq. \eqref{eq:quad} for all subsets $1\leq|\A|\leq l$. 
So the total number of solutions is equal to $2$ times the number of distinct subsets,  %The number of distinct subsets of a given size is equal to $\binom{n}{|\A|}$. The total number of real eigenvectors is equal to,
\[
N(\weight<\weight_0) =1+2\sum_{i=1}^l \binom{n}{i} = 2^n-1,
\]
where we add one to account for $\frac{1}{\sqrt{n}} \bm 1$, corresponding to $\A=\emptyset$. Note that this is also the bound on the number of eigenvectors of a generic cubic tensor, see \cite{cartwright2013number}.
When $\weight=\weight_0$, $\mathcal D(\weight_0,|\A|=l)=0$. %The number of real solutions for all subsets where $|\A|=l$ is decreased by $1$. 
In this case $N(\weight)$ is composed of two eigenvectors for all subsets $1 \leq |\A| \leq l-1$ and one eigenvector for  each subset of size $|\A|=l$, 
\begin{equation}
N(\weight=\weight_0) = 1+2\sum_{j=1}^{l-1} \binom{n}{j} + \binom{n}{l}.
\end{equation}  
For $\weight_0<\weight<\weight_1$, there are no real solutions of Eq. \eqref{eq:quad} for subsets of size $|\A|=l$. The number of real solutions is therefore,
\[
N(\weight_0<\weight<\weight_1) =1+2\sum_{j=1}^{l-1}\binom{n}{j}.
\]
Repeating the argument for increasing values of $\weight$ we obtain $N(\weight)$ as given in the proposition's statement.
%Thus, at $w=\frac{1}{4i(n-i)}$, the number of real eigenpairs $N(w)$ is decreased by $\binom{n}{i}$, which is the number of all possible subsets of size $|A|=i$. At $w>\frac{1}{4i(n-i)}$, $N(w)$ is decreased by $2\binom{n}{i}$. Note, that the discrete function $\frac{1}{4i(n-i)}$ is symmetric around $n/2$.  

We now prove the second part of the proposition, stating that at the thresholds $\weight_i=\frac{1}{4(l - i)(n-l+i)}$, $\binom{n}{l-i}$ of the eigenvectors are not Newton-stable.
In this case $\mathcal D(\weight_i,l-i)=0$ and only one (real) solution to (\ref{eq:quad}) exists for each $\A$ with $|\A|=l-i$.
Solving \eqref{eq:quad} for $\weight=\weight_i$, we find that the $\binom{n}{l-i}$
eigenpairs $(\xast(\A),\lamast(\A))$ with $|\A| = l-i$ are
\[\xast(\A) = %\sqrt{\frac{|\A|}{n(n-|\A|)}}\left(\frac{n-2|\A|}{|\A|}\sum_{i \in \A} \bm e_i+\bm 1\right), \qquad \lamast= \sqrt{\frac{n}{|\A|(n-|\A|)}}.\]
\sqrt{\frac{1}{n|\A|(n-|\A|)}} \big((n-|\A|)\Av+|\A|\Avc\big), \qquad \lamast(\A)= \sqrt{\frac{n}{|\A|(n-|\A|)}}.\]
We show that each such eigenpair is not Newton-{\em stable}. To do so, we prove that the projected Hessian $H_p(\xast)$ is rank deficient. First we compute the Hessian $H(\xast)$. 
%For simplicity, we denote by $b$ the normalizing factor 
Abbreviate $b = \sqrt{\frac{1}{n|\A|(n-|\A|)}}$ and note that $\lamast = nb$. Then,
% the Hessian at $(\xast,\lamast)$,
\begin{equation}
H(\xast) = 2\T(I,I,\xast)-\lamast I = b \bigg( (n-2|\A|)\diag(\Av) +(2|\A|-n) \diag(\Avc)+ \bm 1 \bm 1^T\bigg).
\end{equation}
Consider the vector $\bm v = \Av-\Avc$. 
A simple calculation yields $\bm v^T H(\xast) \bm v = 0$.
%\input{third_part_proposition}
%\begin{equation}
%\bm v^T H(\xast) \bm v = 0.
%\end{equation}
Since $\bm v$ is orthogonal to $\xast$,
%the projected Hessian matrix 
$H_p(\xast)$ is rank deficient and $(\xast,\lamast)$ is not Newton-stable.
%\end{proof}

%%\begin{equation}
%\(
%\min\{|\mu_1^*|,\dots,|\mu_{n-1}|^*\} =0
%\).
%%\end{equation}
%Thus, we conclude that $(\xast,\lamast)$ is not $\g$-stable.

% In addition, we initialized the NCM with $1000$ random points, with various values of $w$ around $w_0$. The left panel shows the number of runs that have converged within $100$ iterations. 
 %As expected from theory, there is a sharp decrease around $w_0$. A similar behavior is observed for O-NCM. 
%\end{document}

\section{\normalfont Convergence to eigenvectors which are not Newton-stable}
\label{app:unstable}
In this section we present a detailed empirical study of the convergence properties of O--NCM.
As discussed in Section \ref{sec:newton_based}, the main property that governs the convergence of O--NCM to an eigenpair $(\xast,\lambda^*)$ is the spectral structure of the projected Hessian at the eigenvector, $H_p(\xast)$.
As shown in Theorem 2, when $H_p(\xast)$ is full rank, O--NCM converges in a quadratic rate to $\xast$ given a sufficiently close initial point. 
%Such a point corresponds to an isolated eigenvector.
When $\xast$ is isolated but $1 \leq \text{rank}(H_p(\xast)) < n$, the convergence rate may be less than quadratic. When $H_p(\xast)=0$ and/or $\xast$ is non-isolated, full convergence to $\xast$ is 
not always observed. 
%When $\xast$ is non-isolated, O--NCM may not convergence either.
These properties are summarized in table \ref{table:convergence}. 

\begin{table}[h!]
	\centering
	\begin{tabular}{ | c | c | c | c| }
		\hline
		& $H_p(\xast)$ full rank & $H_p(\xast)$ rank deficient  & $H_p(\xast) = 0$\\ 
		\hline
		Isolated & Quadratic convergence & Slow convergence &No guarantees\\ 
		\hline
		Non-isolated  & --- & No guarantees &No guarantees\\
		\hline	
	\end{tabular}
	\caption{O--NCM convergence properties to an eigenvector $\xast$.}
	\label{table:convergence}
\end{table}

%The Newton unstable eigenvector $\xast$ given as an example in section \ref{sec:fail} has two important properties: (i) The projected Hessian $H_p(\xast)$ is rank deficient. (ii) The eigenvector is \textit{isolated} on the unit sphere. In this section we illustrate the convergence rate for Newton stable Vs. unstable isolated points. In addition, we illustrate the convergence in cases where the properties (i)-(ii) do not hods. We do so with two additional tensor examples.

We illustrate these convergence properties via two examples. 

\begin{itemize}
	\item[(a)] Consider the tensor $\T$ with order $m=3$ and dimensionality $n=6$ of Example $5.8$ in \cite{cartwright2013number}, corresponding to the homogeneous polynomial
\[
\mu(\xv) = x_1^4x_2^2+x_1^2x_2^4+x_3^6 -x_1^2x_2^2x_3^2.
\]
This tensor has a total of $17$ real eigenpairs. Six of them correspond to a $\lambda=0$ eigenvalue, two of which are not Newton-stable with $\text{rank}(H_p(\xast))=2$. The rest are Newton-stable. 
%Given an eigenvector $\xast$, we initialized the O-NCM with $\xv_0=\xast+0.1 \bm v$, where $\bm v \sim \mathcal N(0,I_n)$.
Figure \ref{fig:unstable_converge} shows the value of $\|\xk-\xast\|$ as a function of the iteration $k$ for one eigenvector that is Newton stable and one that is not. While the convergence to the stable eigenvector is quadratic, the convergence to the point which is not Newton-stable point is much slower. 
\begin{figure}[t]\label{fig:unstable_converge}
\centering
\includegraphics[width=0.5\linewidth]{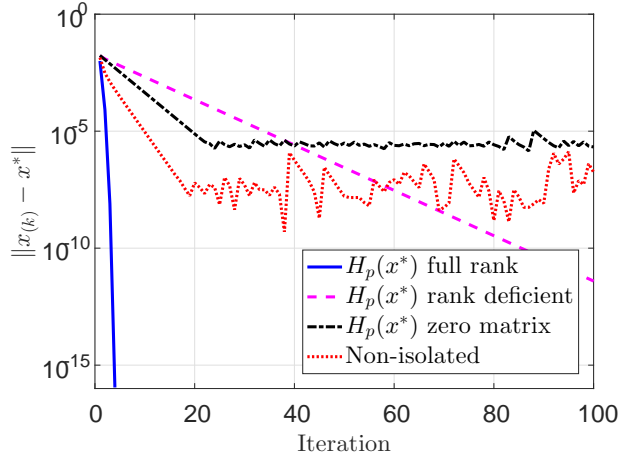}	
\caption{Convergence properties of O-NCM to eigenvectors with different stability properties.}
% based on Examples $5.8$ in \cite{cartwright2013number} and $6.4$ in \cite{li2013z}.} 
\end{figure}
\item[(b)] Consider the tensor $\T \in \R^{6 \times 6 \times 6 \times 6}$ of example $6.4$ in \cite{li2013z}, corresponding to the homogeneous polynomial 
\[
\mu(\xv) = \sum_{i=1}^6 \sum_{j>i} (x_j-x_i)^4.
\]
There are a total of $42$ isolated eigenvectors, including one that corresponds to an eigenvalue $\lambda=0$. The projected Hessian $H_p(\xast)$ for this vector is equal to a zero matrix. 
As can be seen in Figure \ref{fig:unstable_converge}, in this case the O-NCM does not fully converge.

In addition, there are also infinitely many non-isolated eigenvectors corresponding to an eigenvalue $\lambda=4.5$. The projected Hessian of these eigenpairs is a rank deficient (though non zero) matrix. For example, any vector of the form 
\begin{equation}\label{eq:non_isolated_ev}
\xast = [a, \,\, a, \,\, b, \,\, b, \,\, -(a+b), \,\, -(a+b)]^T, \qquad a,b \in \R
\end{equation}
is proportional to a non-isolated eigenvector.
Note that the vectors corresponding to \eqref{eq:non_isolated_ev} form a 2 dimensional subspace. Since these vectors are non-isolated, in this case we measure $\|(I_n-P_{\xast})\xk\|$ instead of $\|\xast-\xk\|$ where $P_{\xast} \in R^{n \times n}$ is the projection matrix onto that subspace. As can be seen in Fig. \ref{fig:unstable_converge} in this case the O-NCM does not converge.
\end{itemize}

\paragraph{Trivial eigenvectors} In some cases, the tensor fibers are spanned by a low dimension subspace. Any vector orthogonal to this subspace is an eigenvector corresponding to an eigenvalue $\lambda=0$, and a projected Hessian $H_p(\xast)$ equal to a zero matrix. This is the case, for instance in example (b) where all fibers are orthogonal to $\xast = [1\ldots,1]^T$. As we have seen, this may cause the O-NCM to slowdown, since the iterations do not converge to these points. 

 A simple pre-processing step is to find these eigenvectors by calculating the subspace of the tensor fibers, namely $\T_{:,i_2,\ldots,i_m},i_2,\ldots,i_m \in [n]$. As a second step, the O-NCM can easily be constrained to that subspace.

%the convergence to the linear subspace. We denote $P_{\xast} \in R^{n \times n}$ the projection matrix onto that subspace.

%For a random representative of the $\lambda=4.5$ eigenvectors we repeated the process described in the first example. For the non-isoslated eigenvectors we
% measure $\|(I_n-P_{\xast})\xk\|$ instead of $\|\xast-\xk\|$.
%Figure \ref{fig:unstable_converge} shows the convergence results for both vectors. For both cases an initial decrease is followed by a random perturbation. Table \ref{table:convergence} summarizes the convergence properties for $\gamma$-Newton-unstable vectors.

%In this section we review the observed convergence properties of O-NCM in specific cases where these properties do not hold. 
%We ran the Newton method on examples $4.7$ and $4.10$ from \cite{cui2014all}. The polynomial in $\xv \in \R^6$ corresponding to example $4.7$ is equal to,
%\[
%\mu(x) = \sum_{i=1}^6 \sum_{j>i} (x_j-x_i)^4. 
%\]

%\newpage
%\bibliographystyle{siamplain}
\bibliographystyle{plain}
\bibliography{bib_tensor_eigenvectors}
%\bibliography{references}

\end{document}